\documentclass[reqno,onecolumn,12pt]{amsart}
%%%%%%%%%%%%%%%%%%%%%%%%%%%%%%%%%%%%%%%%%%%%%%%%%%%%%%%%%%%%%%%%%%%%%%%%%%%%%%%%%%%%%%%%%%%%%%%%%%%%%%%%%%%%%%%%%%%%%%%%%%%%%%%%%%%%%%%%%%%%%%%%%%%%%%%%%%%%%%%%%%%%%%%%%%%%%%%%%%%%%%%%%%%%%%%%%%%%%%%%%%%%%%%%%%%%%%%%%%%%%%%%%%%%%%%%%%%%%%%%%%%%%%%%%%%%
\usepackage{amsfonts,amscd,amsthm}
\usepackage{amssymb}
\usepackage{amsmath}
\usepackage{graphicx}
\usepackage{amscd,amsthm}

\setcounter{MaxMatrixCols}{10}
%TCIDATA{OutputFilter=Latex.dll}
%TCIDATA{Version=5.50.0.2953}
%TCIDATA{<META NAME="SaveForMode" CONTENT="1">}
%TCIDATA{BibliographyScheme=Manual}
%TCIDATA{LastRevised=Friday, June 12, 2015 11:23:42}
%TCIDATA{<META NAME="GraphicsSave" CONTENT="32">}

\newtheorem{theorem}{Theorem}
\theoremstyle{plain}

\newtheorem{corollary}{Corollary}

\newtheorem{definition}{Definition}

\newtheorem{remark}{Remark}

\numberwithin{equation}{section}

\setlength{\textwidth}{6.5in}
\setlength{\textheight}{8.5in}
\setlength{\oddsidemargin}{0.0in}
\setlength{\evensidemargin}{0.0in}
\input{tcilatex}

\begin{document}
\author{}
\title{}
\maketitle

\begin{center}
\pagestyle{myheadings}\thispagestyle{empty}%
\markboth{\bf \.{I}lkay Arslan Güven, Semra Kaya Nurkan and \.{I}pek A\u{g}ao\u{g}lu Tor}
{\bf NOTES ON W-DIRECTION CURVES IN EUCLIDEAN 3-SPACE}

\textbf{\large \ NOTES ON W-DIRECTION CURVES IN EUCLIDEAN 3-SPACE}

\bigskip

\textbf{\.{I}lkay Arslan G\"{u}ven$^{1,\ast }$, Semra Kaya Nurkan$^{2}$ and 
\.{I}pek A\u{g}ao\u{g}lu Tor}$^{\mathbf{3}}$

\bigskip

$^{\mathbf{1,3}}$Department of Mathematics, Faculty of Arts and Science, 
Gaziantep University, TR-27310 Gaziantep, Turkey

\textbf{E-Mail: }$^{\mathbf{1}}$\textbf{iarslan@gantep.edu.tr,
ilkayarslan81@hotmail.com}

\textbf{$^{\ast }$Corresponding Author}

$^{\mathbf{3}}$\textbf{agaogluipek@gmail.com}\\[2mm]

$^{\mathbf{2}}$Department of Mathematics, Faculty of Arts and Science, U\c{s}%
ak University,

TR-64200 U\c{s}ak, Turkey

\textbf{E-Mail: semra.kaya@usak.edu.tr, semrakaya\_gs@yahoo.com\\[2mm]
}

\textbf{\large Abstract}
\end{center}

\begin{quotation}
In this paper, we study the spherical indicatrices of W-direction curves in
three dimensional Euclidean space which were defined by using the unit
Darboux vector field $W$ of a Frenet curve, in \cite{Mac}. We obtain the
Frenet apparatus of these spherical indicatrix curves and the
characterizations of\ being general helix and slant helix. Moreover we give
some properties between the spherical indicatrix curves and their associated
curves.

Then we investigate two special ruled surface that are normal and binormal
surface by using W-direction curves. We give useful results involving the
characterizations of these ruled surfaces. From those applications, we make
use of such a work to interpret the Gaussian, mean curvatures of these
surfaces and geodesic, normal curvature and geodesic torsion of the base
curves with respect to these surfaces.
\end{quotation}

\noindent \textbf{2000 Mathematics Subject Classification.} 53A04, 14H50,
14J26.

\noindent \textbf{Key Words and Phrases.} W-direction curve, spherical
indicatrix, ruled surface, general helix.

\section{\textbf{Introduction}}

The theory of curves is a subbranch of geometry which deals with curves in
Euclidean space or other spaces by using differential and integral calculus.
One of the most studied topic in curve theory is associated curves like
involute-evolute pairs, Bertrand curve pairs, Mannheim partner curves and
W-direction curves. Working with these associated curves is nice aspect that
these curves can be characterizated by the properties and behavior of the
main curves of them.

The most commonly used ones to characterize curves are general or
cylindrical helix and slant helix. A \textit{general helix }in\textit{\ }$%
\mathbb{E}^{3}$ is defined as; its tangent vector field makes a constant
angle with a fixed direction. If the principal normal vector field makes a
constant angle \ with a fixed direction, it is called \textit{slant helix}.
Izumiya and Takeuchi founded that a curve is a slant helix if and only if
the geodesic curvature of the principal image of the principal normal
indicatrix which is%
\begin{equation}
\delta (s)=\left( \frac{\kappa ^{2}}{(\kappa ^{2}+\tau ^{2})^{\frac{3}{2}}}%
.\left( \frac{\tau }{\kappa }\right) ^{\prime }\right) (s)  \tag{1.1}
\end{equation}%
is a constant function \cite{Izu2}.

Choi and Kim created the principal(binormal)-direction curves and
principal(binormal)-donor curve of a Frenet curve in $\mathbb{E}^{3}.$ They
gave the relation of curvature and torsion between the principal-direction
curve and its mate curve. They also defined a new curve \ called
PD-rectifying curve and gave a new characterization of a Bertrand curve by
means of the PD-rectifying curve. They made an application of associated
curves and studied a general helix and slant helix as principal-donor and
second principal-donor curve of a plane, respectively \cite{Choi}. Then Choi
et al. worked on the principal(binormal)-direction curve and
principal(binormal)-donor curve of a Frenet non-lightlike curves in $\mathbb{%
E}_{1}^{3}$ \ \cite{Choi2}.

After that K\"{o}rp\i nar et al. introduced associated curves according to
Bishop frame in $\mathbb{E}^{3}$ \ \cite{Kor}.

Recently, Macit and D\"{u}ld\"{u}l defined W-direction curve, W-rectifying
curve and V-direction curve of a Frenet curve in $\mathbb{E}^{3}$ and also
principal-direction curve, $B_{1}$-direction curve, $B_{2}$-direction curve
and $B_{2}$-rectifying curve in $\mathbb{E}^{4}$. These curves were given as
the integral curves of vector fields taken from the Frenet frame or Darboux
frame along a curve. They gave the relationship of the Frenet vector fields,
the curvature and the torsion between the associated curves and their main
curve \cite{Mac}.

The spherical indicatrix of a curve in three dimensional Euclidean space is
described by moving any of the unit Frenet vectors onto the unit sphere $\
S^{2}$. If any of the Frenet vectors is $X$ of a curve given with the
arc-lenght parameter $s$, then the equation of the spherical indicatrix
curve is given by%
\begin{equation}
\beta (s^{\ast })=X(s).  \tag{1.2}
\end{equation}%
where $s^{\ast }$ is the arc-length parameter of spherical indicatrix.

There are many works on spherical indicatrix curves. Kula and Yayl\i\ in 
\cite{Kula}, investigated spherical images of tangent and binormal
indicatrix of a slant helix. They found that the spherical images are
spherical helices.

Kula et al. gave some characterizations for a unit speed curve in $\mathbb{R}%
^{3}$ to being a slant helix by using its tangent, principal normal and
binormal indicatrix \cite{Kula2}.

Tun\c{c}er and \"{U}nal studied spherical indicatrices of a Bertrand curve
and its mate curve. They obtained relations between spherical images and new
representations of spherical indicatrices \cite{Tun}.

A \textit{ruled surface} in $\mathbb{R}^{3}$ is a surface which can be
described as the set of points swept out by moving a straight line in
surface. It therefore has a parametrization of the form%
\begin{equation}
\Phi (s,v)=\alpha (s)+v\delta (s)  \tag{1.3}
\end{equation}%
where $\alpha $ is a curve lying on the surface called \textit{base curve}
and $\delta $ is called \textit{director curve}. The straight lines are
called rulings. In using the equation of ruled surface we assume that $%
\alpha ^{\prime }$ is never zero and $\delta $ is not identically zero. The
rulings of ruled surface are asymptotic curves. Furthermore, the Gaussian
curvature of ruled surface is everywhere nonpositive. The ruled surface is
developable if and only if the distribution parameter vanishes and it is
minimal if and olny if its mean curvature vanishes \cite{Gray}. Recently,
the ruled surfaces were studied on Euclidean space and Minkowski space, see,
e.g., \cite{Ali, Izu, Izu2, Tur, Tur2, Yu}. Also Nurkan et al. investigated
two ruled surfaces such as normal and binormal surface by using any base
curve and adjoint curve of it \cite{Nur}.

In this paper, we study the spherical indicatrices of W-direction curves. We
obtain the Frenet apparatus of tangent indicatrix and binormal indicatrix
via Frenet vector fields, curvature and torsion of \ the main curves. We
give the characterizations of \ being general helix and slant helix of these
image curves. We also investigate the normal surface and the binormal
surface by taking the base curve as W-direction curve. We give usefull
results about developability, minimality of the surfaces and being
asymptotic line, geodesic curve, principal line of the base curves.

\section{Preliminaries}

Let $\beta :I\longrightarrow \mathbb{E}^{3}$ be a curve and $\left\{
T,N,B\right\} $ denote the Frenet frame of $\beta $. $T(s)=\beta ^{\prime
}(s)$ is called the \textit{unit tangent vector} of $\beta $ at $s$. $\beta $
is a unit speed curve (or parametrized by arc-length $s)$ if and only if $%
\left\Vert \beta ^{\prime }(s)\right\Vert =1.$ The \textit{curvature} of $%
\beta $ is given by $\kappa (s)=\left\Vert \beta ^{\prime \prime
}(s)\right\Vert $. The \textit{unit principal normal vector }$N(s)$ of $%
\beta $ at $s$ is given by $\beta ^{\prime \prime }(s)=\kappa (s).N(s)$.
Also the unit vector $B(s)=T(s)\times N(s)$ is called the \textit{unit
binormal vector }of $\beta $ at $s$. Then the famous Frenet formula holds as;%
\begin{eqnarray*}
T^{\prime }(s) &=&\kappa (s).N(s) \\
N^{\prime }(s) &=&-\kappa (s).T(s)+\tau (s)B(s) \\
B^{\prime }(s) &=&-\tau (s)N(s)
\end{eqnarray*}%
where $\tau (s)$ is the \textit{torsion} of $\beta $ at $s$ and calculated
as $\tau (s)=\langle N^{\prime }(s),B(s)\rangle $ $\ $or \ $\tau
(s)=\left\Vert B^{\prime }(s)\right\Vert .$

Also the Frenet vectors of a curve $\beta $, which is given by arc-length
parameter $s$, can be calculated as;%
\begin{eqnarray}
T(s) &=&\beta ^{\prime }(s)  \notag \\
N(s) &=&\frac{\beta ^{\prime \prime }(s)}{\left\Vert \beta ^{\prime \prime
}(s)\right\Vert }  \TCItag{2.1} \\
B(s) &=&T(s)\times N(s).  \notag
\end{eqnarray}%
For the unit speed curve $\beta :I\longrightarrow \mathbb{E}^{3}$ , the
vector 
\begin{equation}
W(s)=\tau (s)T(s)+\kappa (s)B(s)  \tag{2.2}
\end{equation}%
is called the Darboux vector of $\beta $ which is the rotation vector of
trihedron of the curve with curvature $\kappa \neq 0$ \ when a point moves
along the curve $\beta .$

A unit speed curve $\beta :I\longrightarrow \mathbb{E}^{n}$ \ is a \textit{%
Frenet curve} if $\beta ^{\prime \prime }(s)\neq 0$ and it has the non-zero
curvature.

\begin{definition}
Let $\beta $ be a Frenet curve in $\mathbb{E}^{3}$ \ and \ $W$ be the unit
Darboux vector field of $\beta .$ The integral curve of $W(s)$ is called
W-direction curve of $\beta .$ Namely, if $\overline{\beta }$ is the
W-direction curve of $\beta $, then $W(s)=\overline{\beta }^{\prime }(s)$,
where $W=\frac{1}{\sqrt{\kappa ^{2}+\tau ^{2}}}(\tau T+\kappa B)$ \ \ \cite%
{Mac}.
\end{definition}

Let the Frenet apparatus of a Frenet curve $\beta $ and its W-direction
curve be $\left\{ T,N,B,\kappa ,\tau \right\} $ \ and \ $\left\{ \overline{T}%
,\overline{N},\overline{B},\overline{\kappa },\overline{\tau }\right\} $
respectively. The relations of Frenet apparatus between the main curve and
W-direction curve are given in \cite{Mac} \ as;%
\begin{eqnarray}
\overline{T} &=&\frac{\tau }{\sqrt{\kappa ^{2}+\tau ^{2}}}T+\frac{\kappa }{%
\sqrt{\kappa ^{2}+\tau ^{2}}}B  \notag \\
\overline{N} &=&-\frac{\kappa }{\sqrt{\kappa ^{2}+\tau ^{2}}}T+\frac{\tau }{%
\sqrt{\kappa ^{2}+\tau ^{2}}}B  \TCItag{2.3} \\
\overline{B} &=&-N  \notag \\
\overline{\kappa } &=&\frac{\left\vert \tau \kappa ^{\prime }-\tau ^{\prime
}\kappa \right\vert }{\kappa ^{2}+\tau ^{2}}\text{ \ \ \ , \ \ \ \ }%
\overline{\tau }=\sqrt{\kappa ^{2}+\tau ^{2}}  \notag
\end{eqnarray}

\begin{remark}
In this paper we take the signs of absolute value positive. If the sign will
be taken negative, the expressions similarly have the other signs.
\end{remark}

\begin{theorem}
Let $\overline{\beta }$ be the W-direction curve of $\beta $ which is not a
general helix. Then $\overline{\beta }$ is a general helix if and only if $%
\beta $ is a slant helix \cite{Mac}.
\end{theorem}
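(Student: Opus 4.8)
The plan is to translate both hypotheses into identities involving $\kappa ,\tau$ and their derivatives and then observe that they are literally the \emph{same} condition. First I would invoke the classical Lancret characterization: a Frenet curve in $\mathbb{E}^{3}$ is a general helix if and only if the ratio of its torsion to its curvature is a constant function. Applying this to $\overline{\beta }$ and inserting the formulas for $\overline{\kappa }$ and $\overline{\tau }$ from (2.3) gives
$$\frac{\overline{\tau }}{\overline{\kappa }}=\frac{\sqrt{\kappa ^{2}+\tau ^{2}}\,(\kappa ^{2}+\tau ^{2})}{\left\vert \tau \kappa ^{\prime }-\tau ^{\prime }\kappa \right\vert }=\frac{(\kappa ^{2}+\tau ^{2})^{3/2}}{\left\vert \tau \kappa ^{\prime }-\tau ^{\prime }\kappa \right\vert },$$
so $\overline{\beta }$ is a general helix exactly when $\dfrac{\left\vert \tau \kappa ^{\prime }-\tau ^{\prime }\kappa \right\vert }{(\kappa ^{2}+\tau ^{2})^{3/2}}$ is constant.

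Next I would simplify the Izumiya--Takeuchi geodesic curvature $\delta $ from (1.1). Using $\left( \frac{\tau }{\kappa }\right) ^{\prime }=\frac{\kappa \tau ^{\prime }-\kappa ^{\prime }\tau }{\kappa ^{2}}$, a one-line computation yields $\delta (s)=\dfrac{\kappa \tau ^{\prime }-\kappa ^{\prime }\tau }{(\kappa ^{2}+\tau ^{2})^{3/2}}$, hence $\left\vert \delta (s)\right\vert =\dfrac{\left\vert \tau \kappa ^{\prime }-\tau ^{\prime }\kappa \right\vert }{(\kappa ^{2}+\tau ^{2})^{3/2}}=\dfrac{\overline{\kappa }}{\overline{\tau }}$. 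Thus $\beta $ is a slant helix (that is, $\delta $ is constant) iff $\left\vert \delta \right\vert $ is constant iff $\overline{\tau }/\overline{\kappa }$ is constant iff $\overline{\beta }$ is a general helix. To pass from ``$\left\vert \delta \right\vert $ constant'' back to ``$\delta $ constant'' I would use that $\delta $ is continuous on the interval $I$ (legitimate since $\kappa \neq 0$, so $\kappa ^{2}+\tau ^{2}>0$): a continuous real function on a connected set whose modulus equals a nonzero constant $c$ cannot attain both values $+c$ and $-c$, so it is identically $c$ or identically $-c$.

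The delicate point, and the step I expect to need the most care, is the role of the hypothesis that $\beta $ is not a general helix. It guarantees $\tau \kappa ^{\prime }-\tau ^{\prime }\kappa \not\equiv 0$, hence $\overline{\kappa }\not\equiv 0$, so that $\overline{\beta }$ is genuinely a Frenet curve for which the ratio $\overline{\tau }/\overline{\kappa }$ is meaningful; without it the W-direction curve could collapse to a straight line and the equivalence would degenerate. The same hypothesis also forces the constant value of $\left\vert \delta \right\vert $ to be nonzero in the direction ``$\beta $ slant helix $\Rightarrow \overline{\beta }$ general helix'', since $\left\vert \delta \right\vert \equiv 0$ would say exactly that $\tau /\kappa $ is constant, i.e.\ that $\beta $ \emph{is} a general helix. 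Finally, in accordance with the Remark, the absolute values are read with a fixed sign convention, so the identity $\left\vert \delta \right\vert =\overline{\kappa }/\overline{\tau }$ and all the equivalences above hold as stated; the opposite sign choice changes only the bookkeeping, not the argument.
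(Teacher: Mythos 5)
Your argument is correct. The paper does not prove this statement itself (it is quoted from \cite{Mac}), but your route --- computing $\frac{\overline{\tau }}{\overline{\kappa }}=\frac{(\kappa ^{2}+\tau ^{2})^{3/2}}{\left\vert \tau \kappa ^{\prime }-\tau ^{\prime }\kappa \right\vert }$ from (2.3), identifying this with $1/\left\vert \delta \right\vert $ via (1.1), applying Lancret's criterion to $\overline{\beta }$, and using the non-general-helix hypothesis to exclude the degenerate case $\overline{\kappa }\equiv 0$ (and the zero constant for $\delta $), with the continuity argument fixing the sign --- is exactly the identity $\delta =\overline{\kappa }/\overline{\tau }$ that the paper itself later invokes from Theorem 6 of \cite{Mac}, so your proof is essentially the intended one.
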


\begin{theorem}
A curve is a \textit{general helix }if and only if $\frac{\tau }{\kappa }%
=cons$tant \cite{On}.
\end{theorem}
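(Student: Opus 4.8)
The plan is to prove both implications directly from the definition of a general helix as a curve whose unit tangent $T$ makes a constant angle with some fixed unit direction $u$, using only the Frenet formulas and the standing hypothesis $\kappa\neq 0$ that accompanies $\beta$ being a Frenet curve.

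For the forward direction, assume $\beta$ is a general helix, so there is a fixed unit vector $u$ with $\langle T,u\rangle=\cos\theta$ constant. Differentiating with respect to arc length and using $T^{\prime}=\kappa N$ gives $\kappa\langle N,u\rangle=0$, and since $\kappa\neq 0$ we obtain $\langle N,u\rangle=0$; hence $u$ lies in the plane spanned by $T$ and $B$ and can be written, up to the sign convention of the preceding remark, as $u=\cos\theta\,T+\sin\theta\,B$ (the coefficient of $B$ has modulus $\sin\theta$ since $u$ is a unit vector). Differentiating $\langle N,u\rangle=0$ once more and using $N^{\prime}=-\kappa T+\tau B$ yields $-\kappa\cos\theta+\tau\sin\theta=0$, i.e. $\tau/\kappa=\cot\theta$, which is constant. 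Note here that $\theta\neq 0,\pi$, for otherwise $u=\pm T$ would force $\kappa=0$, contradicting that $\beta$ is a Frenet curve.

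For the converse, suppose $\tau/\kappa=c$ is constant and choose $\theta\in(0,\pi)$ with $\cot\theta=c$, which is possible since $\cot$ maps $(0,\pi)$ bijectively onto $\mathbb{R}$. Set $u=\cos\theta\,T+\sin\theta\,B$ and differentiate: $u^{\prime}=\cos\theta\,\kappa N-\sin\theta\,\tau N=(\kappa\cos\theta-\tau\sin\theta)N=0$, because $\tau\sin\theta=\kappa\cos\theta$ by the choice of $\theta$. Thus $u$ is a constant vector, and $\langle T,u\rangle=\cos\theta$ is constant, so $T$ makes a constant angle with the fixed direction $u$ and $\beta$ is a general helix.

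The argument presents no genuine obstacle; the only points requiring care are the sign bookkeeping when solving for $u$ in the $\{T,B\}$-plane, which is absorbed into the convention of Remark~1, and the observation that $\kappa\neq 0$ is exactly what permits passing from $\kappa\langle N,u\rangle=0$ to $\langle N,u\rangle=0$. An alternative and essentially equivalent route is to note that $\tau/\kappa$ constant is precisely the condition that the unit Darboux vector $\frac{1}{\sqrt{\kappa^{2}+\tau^{2}}}(\tau T+\kappa B)$ has vanishing derivative, hence is a fixed direction, whose constant angle with $T$ is then immediate; this viewpoint also meshes naturally with the relations in $(2.3)$ used elsewhere in the paper.
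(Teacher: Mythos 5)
Your argument is correct, but note that the paper itself offers no proof of this statement: it is quoted verbatim from O'Neill (reference [On]) as the classical theorem of Lancret, so there is nothing internal to compare against. What you have written is precisely the standard textbook proof, and it is sound: differentiating $\langle T,u\rangle=\cos\theta$ and using $\kappa\neq 0$ to get $\langle N,u\rangle=0$, then differentiating again to extract $\tau/\kappa=\cot\theta$, and conversely checking that $u=\cos\theta\,T+\sin\theta\,B$ is parallel when $\tau/\kappa$ is constant. One small point of bookkeeping: your appeal to ``the sign convention of the preceding remark'' is misplaced (Remark 1 of the paper concerns the absolute value in $\overline{\kappa}$, not this situation); the honest justification is that $\langle B,u\rangle$ is itself constant, since $\langle B,u\rangle^{\prime}=-\tau\langle N,u\rangle=0$, and the unit-norm condition then forces $\langle B,u\rangle=\pm\sin\theta$, either sign yielding $\tau/\kappa=\pm\cot\theta$ constant, which is all the theorem asserts. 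Your closing observation is worth keeping: $\tau/\kappa$ constant is exactly the condition that the unit Darboux vector $\frac{1}{\sqrt{\kappa^{2}+\tau^{2}}}(\tau T+\kappa B)$ is a fixed direction making a constant angle with $T$, and this formulation is the one that interacts most directly with the relations (2.3) and the W-direction curves studied in the rest of the paper.
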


A ruled surface given with the equation (1.3) is in the general form. The 
\textit{normal} and the \textit{binormal} surfaces which are also ruled
surfaces are defined by%
\begin{eqnarray}
\Phi (s,v) &=&\alpha (s)+vN(s)  \TCItag{2.4} \\
\Phi (s,v) &=&\alpha (s)+vB(s)  \TCItag{2.5}
\end{eqnarray}%
where $\alpha $ is a curve with arc-length parameter $s$ and $\left\{
T,N,B\right\} $ are the Frenet vector fields of $\alpha $ \cite{Izu, Izu2}.

The \textit{distribution parameter} $\lambda $ of a ruled surface $\Phi $
given in equation (1.3) is dedicated as;%
\begin{equation}
\lambda =\frac{\det (\frac{d\alpha }{ds},\delta ,\frac{d\delta }{ds})}{%
\left\Vert \frac{d\delta }{ds}\right\Vert ^{2}}.  \tag{2.6}
\end{equation}

The standard unit normal vector field $U$ on the ruled surface $\Phi $ \ is
defined by%
\begin{equation}
U=\frac{\Phi _{s}\times \Phi _{v}}{\left\Vert \Phi _{s}\times \Phi
_{v}\right\Vert }.  \tag{2.7}
\end{equation}%
where $\Phi _{s}=\frac{d\Phi }{ds}$ \ and \ $\Phi _{v}=\frac{d\Phi }{dv}$.

The \textit{Gaussian curvature} and \textit{mean curvature} of a ruled
surface given in equation (1.3) \ are given respectively by%
\begin{equation}
K=\frac{eg-f^{2}}{EG-F^{2}}  \tag{2.8}
\end{equation}%
and%
\begin{equation}
H=\frac{Eg+Ge-2Ff}{2(EG-F^{2})}  \tag{2.9}
\end{equation}%
where $E=\left\langle \Phi _{s},\Phi _{s}\right\rangle $ \ , \ $%
F=\left\langle \Phi _{s},\Phi _{v}\right\rangle $ \ , \ $G=\left\langle \Phi
_{v},\Phi _{v}\right\rangle $ \ , \ $e=\left\langle \Phi
_{ss},U\right\rangle $ \ , \ $f=\left\langle \Phi _{sv},U\right\rangle $ \
and $g=\left\langle \Phi _{vv},U\right\rangle $ (see\cite{On})$.$

\begin{definition}
A surface is developable (flat) provided its Gaussian curvature is zero and
minimal provided its mean curvature is zero \cite{On}.
\end{definition}

The equation between the distribution parameter and Gaussian curvature which
is called \ Lamarle formula is defined as%
\begin{equation*}
K=-\frac{\lambda ^{2}}{\left( \lambda ^{2}+v^{2}\right) ^{2}}
\end{equation*}%
where $K$ is the Gaussian curvature and $\lambda $ is the distribution
parameter (see \cite{Krup}).

Namely, if the distribution parameter is zero, then the surface is
developable.

Let $\alpha $ be any curve of arc-length parameter $s$ and has the Frenet
frame $\left\{ T,N,B\right\} $ along $\alpha .$ If $\alpha $ is on a
surface, the frame $\left\{ T,V,U\right\} $ along the curve $\alpha $ is
called the Darboux frame where $T$ is the unit tangent vector of $\alpha $, $%
U$ is the unit normal of the surface and $V$ is the unit vector given by $%
V=U\times T.$ The relations between these vectors and their derivatives are 
\cite{On};%
\begin{equation*}
\left[ 
\begin{array}{c}
T^{\prime } \\ 
V^{\prime } \\ 
U^{\prime }%
\end{array}%
\right] =\left[ 
\begin{array}{ccc}
0 & \kappa _{g} & \kappa _{n} \\ 
-\kappa _{g} & 0 & \tau _{g} \\ 
-\kappa _{n} & -\tau _{g} & 0%
\end{array}%
\right] \left[ 
\begin{array}{c}
T \\ 
V \\ 
U%
\end{array}%
\right]
\end{equation*}%
where $\kappa _{g}$ is the \textit{geodesic curvature,} $\kappa _{n}$ is the 
\textit{normal curvature} and $\tau _{g}$ is the \textit{geodesic torsion.}

If the curve $\alpha $ is the base curve of the ruled surface given in
(1.3), then the geodesic curvature, normal curvature and geodesic torsion
with respect to the ruled surface are also given respectively by \cite{On};%
\begin{equation}
\kappa _{g}=\left\langle U\times T,T^{\prime }\right\rangle   \tag{2.10}
\end{equation}%
\begin{equation}
\kappa _{n}=\left\langle \alpha ^{\prime \prime },U\right\rangle   \tag{2.11}
\end{equation}%
and%
\begin{equation}
\tau _{g}=\left\langle U\times U^{\prime },T^{\prime }\right\rangle . 
\tag{2.12}
\end{equation}

For a curve $\alpha $ which is lying on a surface, the following statements
are satisfied \cite{On};

\textbf{1) }$\alpha $ is a geodesic curve if and only if the geodesic
curvature of the curve with respect to the surface vanishes.

\textbf{2) }$\alpha $ is a asymptotic line if and only if the normal
curvature of the curve with respect to the surface vanishes.

\textbf{3) }$\alpha $ is a principal line if and only if the geodesic
torsion of the curve with respect to the surface vanishes.

\section{\textbf{Spherical Images of W-direction Curves}}

In this section, we will introduce tangent indicatrix and binormal
indicatrix curves which are spherical indicatrices of W-direction curves. We
find their Frenet apparatus and give some results of being general helix and
slant helix.

Let $\beta $ be a curve with arc-length parameter $s$ and $\overline{\beta }$
be the W-direction curve of $\beta .$ The arc-length parameter $\overline{s}$
of $\overline{\beta }$ which is an integral curve of $\beta $, can be taken
as $\overline{s}=s$ (see \cite{Choi}). The Frenet apparatus of $\beta $ and $%
\overline{\beta }$ are $\left\{ T,N,B,\kappa ,\tau \right\} $ \ and \ $%
\left\{ \overline{T},\overline{N},\overline{B},\overline{\kappa },\overline{%
\tau }\right\} $ respectively. Here also $\delta $ is the geodesic curvature
of the principal image of the principal normal indicatrix given with the
equation (1.1).

Using the equation (1.2), the tangent indicatrix and binormal indicatrix
curves of W-direction curve $\overline{\beta }$ are given with the equations%
\begin{eqnarray}
\alpha (s_{\alpha }) &=&\overline{T}(s)  \TCItag{3.1} \\
\gamma (s_{\gamma }) &=&\overline{B}(s)  \TCItag{3.2}
\end{eqnarray}%
where $s_{\alpha }$ \ and \ $s_{\gamma }$ \ are arc-length parameters of
tangent and binormal indicatrix curves, respectively.

\begin{theorem}
Let $\beta $ be a curve with arc-length parameter $s$ and $\overline{\beta }$
be the W-direction curve of $\beta .$ The Frenet vector fields, curvature
and torsion of the tangent indicatrix curve $\alpha $ of W-direction curve\
are given by%
\begin{eqnarray*}
T_{\alpha } &=&-\frac{1}{\sqrt{1+f^{2}}}T+\frac{f}{\sqrt{1+f^{2}}}B \\
N_{\alpha } &=&-\frac{\kappa ^{\prime }(f-g)}{\sqrt{(\kappa ^{\prime
})^{2}(f-g)^{2}(1+f^{2})+\kappa ^{4}(1+f^{2})^{4}}}\left( fT+\frac{\kappa
^{2}(1+f^{2})^{2}}{\kappa ^{\prime }(f-g)}N+B\right)  \\
B_{\alpha } &=&\frac{\kappa (1+f^{2})}{\sqrt{(\kappa ^{\prime
})^{2}(f-g)^{2}+\kappa ^{4}(1+f^{2})^{3}}}\left( \tau T-\frac{\kappa
^{\prime }(f-g)}{\kappa (1+f^{2})}N+\kappa B\right)  \\
\kappa _{\alpha } &=&\sqrt{1+\frac{\kappa ^{4}(1+f^{2})^{3}}{(\kappa
^{\prime })^{2}(f-g)^{2}}} \\
\tau _{\alpha } &=&\frac{\kappa ^{2}\sqrt{(1+f^{2})^{3}}}{\kappa ^{\prime
}(f-g)\left( (\kappa ^{\prime })^{2}(f-g)^{2}+\kappa
^{4}(1+f^{2})^{3}\right) }\left( 3(\kappa ^{\prime })^{2}(1+fg)(f-g)-\kappa
\kappa ^{\prime \prime }(f-h)(1+f^{2})\right) 
\end{eqnarray*}%
where $f=\frac{\tau }{\kappa }$, \ \ $g=\frac{\tau ^{\prime }}{\kappa
^{\prime }}$ \ and \ $h=\frac{\tau ^{\prime \prime }}{\kappa ^{\prime \prime
}}.$
\end{theorem}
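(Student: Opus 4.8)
The plan is to treat the W-direction curve $\overline{\beta}$ as an auxiliary base curve and to build the Frenet apparatus of its tangent indicatrix $\alpha$ out of the apparatus $\left\{\overline{T},\overline{N},\overline{B},\overline{\kappa},\overline{\tau}\right\}$ supplied by (2.3), translating back to $\kappa,\tau$ only at the very end through the abbreviations $f=\tau/\kappa$, $g=\tau^{\prime}/\kappa^{\prime}$, $h=\tau^{\prime\prime}/\kappa^{\prime\prime}$. The whole translation rests on the two algebraic identities
\begin{equation*}
\tau\kappa^{\prime}-\tau^{\prime}\kappa=\kappa\kappa^{\prime}(f-g),\qquad \kappa^{2}+\tau^{2}=\kappa^{2}(1+f^{2}),
\end{equation*}
which, inserted into (2.3), give $\overline{\kappa}=\frac{\kappa^{\prime}(f-g)}{\kappa(1+f^{2})}$ and $\overline{\tau}=\kappa\sqrt{1+f^{2}}$, together with $\overline{T}=\frac{fT+B}{\sqrt{1+f^{2}}}$, $\overline{N}=\frac{-T+fB}{\sqrt{1+f^{2}}}$, $\overline{B}=-N$. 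Throughout I adopt the sign conventions of Remark~1 and assume $\kappa,\kappa^{\prime},\kappa^{\prime\prime}$ nonvanishing and $f\neq g$ (equivalently $\beta$ is not a general helix, so that $\overline{\kappa}\neq 0$ and $\alpha$ is a genuine regular curve).

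For the first four quantities I proceed by successive differentiation. From (3.1), $\alpha=\overline{T}$, so the Frenet equations of $\overline{\beta}$ give $\alpha^{\prime}=\overline{\kappa}\,\overline{N}$ (derivative in $s$); since $\overline{s}=s$ this yields $\frac{ds_{\alpha}}{ds}=\overline{\kappa}$ and $T_{\alpha}=\overline{N}$, and substituting the expression for $\overline{N}$ above produces the stated $T_{\alpha}$. Differentiating once more, $\frac{dT_{\alpha}}{ds_{\alpha}}=\frac{1}{\overline{\kappa}}\,\overline{N}^{\prime}=-\overline{T}+\frac{\overline{\tau}}{\overline{\kappa}}\,\overline{B}$; its norm is $\kappa_{\alpha}=\sqrt{1+\overline{\tau}^{2}/\overline{\kappa}^{2}}$, and inserting $\overline{\kappa},\overline{\tau}$ gives $\kappa_{\alpha}=\sqrt{1+\frac{\kappa^{4}(1+f^{2})^{3}}{(\kappa^{\prime})^{2}(f-g)^{2}}}$ as claimed. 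Normalizing the same vector gives $N_{\alpha}=\frac{-\overline{\kappa}\,\overline{T}+\overline{\tau}\,\overline{B}}{\sqrt{\overline{\kappa}^{2}+\overline{\tau}^{2}}}$, and then $B_{\alpha}=T_{\alpha}\times N_{\alpha}=\frac{\overline{\tau}\,\overline{T}+\overline{\kappa}\,\overline{B}}{\sqrt{\overline{\kappa}^{2}+\overline{\tau}^{2}}}$ using $\overline{N}\times\overline{T}=-\overline{B}$ and $\overline{N}\times\overline{B}=\overline{T}$. Replacing $\overline{T},\overline{B}$ by their $T,N,B$ expressions and pulling out the common factor (note $\overline{\tau}\,\overline{T}=\tau T+\kappa B$ since $\kappa f=\tau$) yields the displayed $N_{\alpha}$ and $B_{\alpha}$.

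The torsion is the one computation that needs second-order data. I would use the parametrization-free formula $\tau_{\alpha}=\frac{\langle\alpha^{\prime}\times\alpha^{\prime\prime},\alpha^{\prime\prime\prime}\rangle}{\left\Vert \alpha^{\prime}\times\alpha^{\prime\prime}\right\Vert ^{2}}$ with derivatives in $s$. Iterating the Frenet equations of $\overline{\beta}$ on $\alpha^{\prime}=\overline{\kappa}\,\overline{N}$ gives
\begin{align*}
\alpha^{\prime\prime}&=-\overline{\kappa}^{2}\,\overline{T}+\overline{\kappa}^{\prime}\,\overline{N}+\overline{\kappa}\,\overline{\tau}\,\overline{B},\\
\alpha^{\prime\prime\prime}&=-3\overline{\kappa}\,\overline{\kappa}^{\prime}\,\overline{T}+\bigl(\overline{\kappa}^{\prime\prime}-\overline{\kappa}^{3}-\overline{\kappa}\,\overline{\tau}^{2}\bigr)\overline{N}+\bigl(2\overline{\kappa}^{\prime}\,\overline{\tau}+\overline{\kappa}\,\overline{\tau}^{\prime}\bigr)\overline{B},
\end{align*}
so that $\alpha^{\prime}\times\alpha^{\prime\prime}=\overline{\kappa}^{2}\bigl(\overline{\tau}\,\overline{T}+\overline{\kappa}\,\overline{B}\bigr)$ and the quotient collapses to $\tau_{\alpha}=\frac{\overline{\kappa}\,\overline{\tau}^{\prime}-\overline{\kappa}^{\prime}\,\overline{\tau}}{\overline{\kappa}\,(\overline{\kappa}^{2}+\overline{\tau}^{2})}$. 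It then remains to express $\overline{\kappa}^{\prime},\overline{\tau}^{\prime}$ through $\kappa,\tau,f,g,h$; using $f^{\prime}=-\frac{\kappa^{\prime}(f-g)}{\kappa}$ and $g^{\prime}=\frac{\kappa^{\prime\prime}(h-g)}{\kappa^{\prime}}$ one obtains, after simplification,
\begin{equation*}
\overline{\tau}^{\prime}=\frac{\kappa^{\prime}(1+fg)}{\sqrt{1+f^{2}}},\qquad \overline{\kappa}^{\prime}=\frac{\kappa\kappa^{\prime\prime}(f-h)(1+f^{2})-2(\kappa^{\prime})^{2}(f-g)(1+fg)}{\kappa^{2}(1+f^{2})^{2}}.
\end{equation*}
Substituting these together with $\overline{\kappa},\overline{\tau}$ into the quotient, the two copies of $(\kappa^{\prime})^{2}(f-g)(1+fg)$ combine with total coefficient $3$, and one is left exactly with the stated $\tau_{\alpha}$.

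I expect this last step to be the main obstacle. It is the only place where $\kappa^{\prime\prime},\tau^{\prime\prime}$ (hence $h$) enter, it requires differentiating the already compound quantity $\overline{\kappa}$ and handling $(f-g)^{\prime}$ through $g^{\prime}$, and the final simplification only succeeds once the $(1+fg)$ and $(f-h)$ terms are grouped so that the numerator factors as $3(\kappa^{\prime})^{2}(1+fg)(f-g)-\kappa\kappa^{\prime\prime}(f-h)(1+f^{2})$. Everything preceding it is a mechanical application of the Frenet equations for $\overline{\beta}$ together with the two algebraic identities recorded at the outset.
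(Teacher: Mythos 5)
Your proposal is correct and follows essentially the same route as the paper: compute $\frac{ds_{\alpha}}{ds}=\overline{\kappa}$, express $T_{\alpha},N_{\alpha},B_{\alpha},\kappa _{\alpha },\tau _{\alpha }$ through the Frenet apparatus of $\overline{\beta }$, then translate via (2.3) and the abbreviations $f,g,h$; your intermediate identities ($\overline{\kappa }=\frac{\kappa ^{\prime }(f-g)}{\kappa (1+f^{2})}$, $\overline{\tau }=\kappa \sqrt{1+f^{2}}$, the expressions for $\overline{\tau }^{\prime }$ and $\overline{\kappa }^{\prime }$, and the coefficient-$3$ recombination in the numerator) all check out against the stated formulas. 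The only divergence is minor: you derive the signed torsion from the triple-product formula $\tau _{\alpha }=\frac{\langle \alpha ^{\prime }\times \alpha ^{\prime \prime },\alpha ^{\prime \prime \prime }\rangle }{\left\Vert \alpha ^{\prime }\times \alpha ^{\prime \prime }\right\Vert ^{2}}$, whereas the paper invokes the unsigned norm-type expression (3.4); under the sign convention of Remark 1 the two coincide, so this is a variation in one step rather than a different method.
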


\begin{proof}
The equation of tangent indicatrix curve $\alpha $ \ is given in equation
(3.1) with the arc-length parameter $s_{\alpha }$. By differentiating
equation (3.1) and using Frenet formulas we get%
\begin{equation*}
\frac{ds_{\alpha }}{ds}=\overline{\kappa }.
\end{equation*}%
If we use the equations in (2.1) and the relation $\frac{ds_{\alpha }}{ds}=%
\overline{\kappa }$ , we find the tangent, prinicipal and binormal vector
fields respectively as;%
\begin{eqnarray*}
T_{\alpha } &=&\overline{N} \\
N_{\alpha } &=&-\frac{\overline{\kappa }}{\sqrt{\overline{\kappa }^{2}+%
\overline{\tau }^{2}}}\overline{T}+\frac{\overline{\tau }}{\sqrt{\overline{%
\kappa }^{2}+\overline{\tau }^{2}}}\overline{B} \\
B_{\alpha } &=&\frac{\overline{\tau }}{\sqrt{\overline{\kappa }^{2}+%
\overline{\tau }^{2}}}\overline{T}+\frac{\overline{\kappa }}{\sqrt{\overline{%
\kappa }^{2}+\overline{\tau }^{2}}}\overline{B}.
\end{eqnarray*}%
By writing the relations (2.3) in the last equations, we have;%
\begin{eqnarray*}
T_{\alpha } &=&-\frac{\kappa }{\sqrt{\kappa ^{2}+\tau ^{2}}}T+\frac{\tau }{%
\sqrt{\kappa ^{2}+\tau ^{2}}}B \\
N_{\alpha } &=&-\frac{\tau \kappa ^{\prime }-\tau ^{\prime }\kappa }{\sqrt{%
(\tau \kappa ^{\prime }-\tau ^{\prime }\kappa )^{2}+(\kappa ^{2}+\tau
^{2})^{3}}}\left( \frac{\tau }{\sqrt{\kappa ^{2}+\tau ^{2}}}T+\frac{\sqrt{%
(\kappa ^{2}+\tau ^{2})^{3}}}{\tau \kappa ^{\prime }-\tau ^{\prime }\kappa }%
N+\frac{\kappa }{\sqrt{\kappa ^{2}+\tau ^{2}}}B\right) \\
B_{\alpha } &=&\frac{\kappa ^{2}+\tau ^{2}}{\sqrt{(\tau \kappa ^{\prime
}-\tau ^{\prime }\kappa )^{2}+(\kappa ^{2}+\tau ^{2})^{3}}}\left( \tau T-%
\frac{\tau \kappa ^{\prime }-\tau ^{\prime }\kappa }{\kappa ^{2}+\tau ^{2}}%
N+\kappa B\right) .
\end{eqnarray*}%
In the last equations assuming $f=\frac{\tau }{\kappa }$, \ \ $g=\frac{\tau
^{\prime }}{\kappa ^{\prime }}$ \ and \ $h=\frac{\tau ^{\prime \prime }}{%
\kappa ^{\prime \prime }}$ \ and \ arranging the expressions, we obtain the
tangent, principal and binormal vector fields of the tangent indicatrix
curve $\alpha $ of the W-direction curve $\overline{\beta },$ with respect
to the main curve $\beta .$

Also the curvature and torsion of the tangent indicatrix curve $\alpha $ are
found as;%
\begin{eqnarray}
\kappa _{\alpha } &=&\frac{\sqrt{\overline{\kappa }^{2}+\overline{\tau }^{2}}%
}{\overline{\kappa }}  \TCItag{3.3} \\
\tau _{\alpha } &=&\sqrt{\frac{(\overline{\tau }^{\prime }\overline{\kappa }-%
\overline{\tau }\overline{\kappa }^{\prime })^{2}+\left( \overline{\kappa }%
^{\prime }\frac{\overline{\tau }^{2}}{\overline{\kappa }}-\overline{\tau }%
\overline{\tau }^{\prime }\right) ^{2}}{\left( \overline{\kappa }^{2}+%
\overline{\tau }^{2}\right) ^{3}}}.  \TCItag{3.4}
\end{eqnarray}%
Again by using the relations (2.3) in equations (3.3) and (3.4), we get;%
\begin{eqnarray*}
\kappa _{\alpha } &=&\frac{\sqrt{(\tau \kappa ^{\prime }-\tau ^{\prime
}\kappa )^{2}+(\kappa ^{2}+\tau ^{2})^{3}}}{\tau \kappa ^{\prime }-\tau
^{\prime }\kappa } \\
\tau _{\alpha } &=&\frac{\sqrt{(\kappa ^{2}+\tau ^{2})^{3}}}{(\tau \kappa
^{\prime }-\tau ^{\prime }\kappa )\left( (\tau \kappa ^{\prime }-\tau
^{\prime }\kappa )^{2}+(\kappa ^{2}+\tau ^{2})^{3}\right) }\left( 
\begin{array}{c}
3(\kappa \kappa ^{\prime }+\tau \tau ^{\prime })(\tau \kappa ^{\prime }-\tau
^{\prime }\kappa ) \\ 
-(\tau \kappa ^{\prime \prime }-\tau ^{\prime \prime }\kappa )(\kappa
^{2}+\tau ^{2})%
\end{array}%
\right) .
\end{eqnarray*}%
Taking $f,$ $g$ \ and \ $h$ \ in the last equations, we reach the result.
\end{proof}

\begin{theorem}
If any curve $\beta $ with arc-length parameter $s$ is slant helix, then the
tangent indicatrix of W-direction curve of $\beta $ is a planar curve.
\end{theorem}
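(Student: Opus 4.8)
The plan is to translate ``planar'' into ``zero torsion'' and then to deduce the vanishing of the torsion $\tau_{\alpha}$ of the tangent indicatrix from the slant helix hypothesis, passing through the W-direction curve $\overline{\beta}$. I first dispose of the degenerate situation: if $\beta$ were itself a general helix, then $\tau/\kappa$ is constant, so $\tau\kappa^{\prime}-\tau^{\prime}\kappa=0$ and hence $\overline{\kappa}=0$ by the relations (2.3); in that case $\overline{\beta}$ is a straight line and its tangent indicatrix $\alpha=\overline{T}$ is a single point, so there is nothing to prove. Thus I may and do assume $\beta$ is not a general helix, which is precisely the hypothesis under which the quoted Macit--D\"{u}ld\"{u}l theorem (Theorem~1) applies.

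By that theorem, since $\beta$ is a slant helix the W-direction curve $\overline{\beta}$ is a general helix; and by the helix characterization $\tau/\kappa=\mathrm{const}$ (Theorem~2), applied to $\overline{\beta}$, this says that $\overline{\tau}/\overline{\kappa}$ is a constant function. Differentiating the quotient yields the single relation
\[
\overline{\tau}^{\prime}\overline{\kappa}-\overline{\tau}\,\overline{\kappa}^{\prime}=0 .
\]
Now I insert this into formula (3.4) for $\tau_{\alpha}$,
\[
\tau_{\alpha}=\sqrt{\frac{(\overline{\tau}^{\prime}\overline{\kappa}-\overline{\tau}\,\overline{\kappa}^{\prime})^{2}+\left(\overline{\kappa}^{\prime}\frac{\overline{\tau}^{2}}{\overline{\kappa}}-\overline{\tau}\,\overline{\tau}^{\prime}\right)^{2}}{\left(\overline{\kappa}^{2}+\overline{\tau}^{2}\right)^{3}}}.
\]
The first term in the numerator vanishes by the displayed relation, and the second term equals $\frac{\overline{\tau}^{2}}{\overline{\kappa}^{2}}\left(\overline{\kappa}^{\prime}\overline{\tau}-\overline{\kappa}\,\overline{\tau}^{\prime}\right)^{2}$, which vanishes for the same reason. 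Hence $\tau_{\alpha}\equiv 0$, and since a space curve with identically zero torsion lies in a plane, $\alpha$ is a planar curve.

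There is no genuine obstacle here; the only points requiring care are the exclusion of the degenerate general-helix case above and the observation that, although $\tau_{\alpha}$ is displayed with an absolute value (cf.\ the Remark), the sign convention is irrelevant once we are merely showing $\tau_{\alpha}=0$. Conceptually the statement is immediate: a general helix has $\langle\overline{T},\text{axis}\rangle$ constant, so the tangent indicatrix $\alpha=\overline{T}$ is the intersection of the unit sphere $S^{2}$ with a plane, i.e.\ a circle, which is of course planar. A less efficient alternative would be to substitute the Izumiya--Takeuchi slant helix condition $\delta=\mathrm{const}$ directly into the $f,g,h$-expression for $\tau_{\alpha}$ from the preceding theorem and check that the bracketed factor $3(\kappa^{\prime})^{2}(1+fg)(f-g)-\kappa\kappa^{\prime\prime}(f-h)(1+f^{2})$ vanishes, but that route is computationally much heavier and less transparent.
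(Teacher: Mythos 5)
Your proof is correct and takes essentially the same route as the paper: both feed the constancy of $\overline{\kappa}/\overline{\tau}$ (equivalently $\overline{\tau}/\overline{\kappa}$) forced by the slant-helix hypothesis into formula (3.4) and conclude $\tau_{\alpha}=0$, the paper phrasing this via the identity $\delta=\overline{\kappa}/\overline{\tau}$ from \cite{Mac} together with $\delta^{\prime}=0$, while you cite Theorems 1 and 2 instead. Your explicit exclusion of the degenerate case where $\beta$ is also a general helix (so $\overline{\kappa}=0$ and the tangent indicatrix collapses to a point) is a detail the paper leaves implicit, since its expression $\tau_{\alpha}=\delta^{\prime}/\bigl(\overline{\tau}\,\delta(1+\delta^{2})\bigr)$ silently divides by $\delta$.
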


\begin{proof}
By Theorem 6 in \cite{Mac}, if the geodesic curvature of the principal image
of the principal normal indicatrix of the curve $\beta $ is $\delta $, then $%
\frac{\overline{\kappa }}{\overline{\tau }}=\delta $ where $\overline{\kappa 
}$ \ and $\overline{\tau }$ are curvature and torsion of the W-direction
curve of $\beta .$

By using the equation (3.4) and $\frac{\overline{\kappa }}{\overline{\tau }}%
=\delta $, the torsion of the tangent indicatrix curve $\alpha $ is found as;%
\begin{equation*}
\tau _{\alpha }=\frac{\delta ^{\prime }}{\overline{\tau }\text{ }\delta
(1+\delta ^{2})}.
\end{equation*}%
If the curve $\beta $ is slant helix, then $\delta ^{\prime }=0$. So by the
last equation $\tau _{\alpha }=0$ which means the tangent indicatrix curve $%
\alpha $ is planar.
\end{proof}

\begin{remark}
From now on, the equations $f=\frac{\tau }{\kappa }$, \ \ $g=\frac{\tau
^{\prime }}{\kappa ^{\prime }}$ , \ $h=\frac{\tau ^{\prime \prime }}{\kappa
^{\prime \prime }}$ \ , \ $\delta =\frac{\overline{\kappa }}{\overline{\tau }%
}$ \ and $\kappa ^{\prime }(f-g)=-\delta \kappa ^{2}(1+f^{2})^{3/2}$ \ will
be used in the calculations.
\end{remark}

\begin{theorem}
The tangent indicatrix curve $\alpha $ of the W-direction curve is a general
helix if and only if the following equation is satisfied;%
\begin{equation*}
A^{\prime }-A\left( \frac{3\kappa ^{\prime }}{\kappa }+\frac{4ff^{\prime }}{%
1+f^{2}}+\frac{3\delta \delta ^{\prime }}{1+\delta ^{2}}\right) =0
\end{equation*}%
where \ $A=\kappa ^{\prime \prime }(f-h)+3\kappa \kappa ^{\prime }\sqrt{%
1+f^{2}}(1+fg)\delta .$
\end{theorem}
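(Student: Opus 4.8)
The plan is to invoke Theorem 4: the tangent indicatrix $\alpha$ is a general helix if and only if the ratio $\tau_{\alpha}/\kappa_{\alpha}$ is a constant function. So the first task is to rewrite the formulas for $\kappa_{\alpha}$ and $\tau_{\alpha}$ supplied by Theorem 5 in terms of $\delta$, using the identity $\kappa^{\prime}(f-g)=-\delta\kappa^{2}(1+f^{2})^{3/2}$ recorded in Remark 2.

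For the curvature, squaring and substituting $(\kappa^{\prime})^{2}(f-g)^{2}=\delta^{2}\kappa^{4}(1+f^{2})^{3}$ into $\kappa_{\alpha}^{2}=1+\kappa^{4}(1+f^{2})^{3}/[(\kappa^{\prime})^{2}(f-g)^{2}]$ collapses it to $\kappa_{\alpha}=\sqrt{1+\delta^{2}}/\delta$. For the torsion the crucial manipulation is inside the bracket $3(\kappa^{\prime})^{2}(1+fg)(f-g)-\kappa\kappa^{\prime\prime}(f-h)(1+f^{2})$ appearing in the numerator of $\tau_{\alpha}$: writing $(\kappa^{\prime})^{2}(f-g)=\kappa^{\prime}\cdot[\kappa^{\prime}(f-g)]$ and substituting the Remark 2 identity into that factor exposes the common factor $-\kappa(1+f^{2})$ and produces exactly the quantity $A=\kappa^{\prime\prime}(f-h)+3\kappa\kappa^{\prime}\sqrt{1+f^{2}}(1+fg)\delta$; meanwhile the denominator $\kappa^{\prime}(f-g)\big((\kappa^{\prime})^{2}(f-g)^{2}+\kappa^{4}(1+f^{2})^{3}\big)$ simplifies to $-\delta\kappa^{6}(1+f^{2})^{9/2}(1+\delta^{2})$. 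Cancelling the surviving powers of $\kappa$ and $1+f^{2}$ then yields the compact form $\tau_{\alpha}=A/[\delta\kappa^{3}(1+f^{2})^{2}(1+\delta^{2})]$.

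Combining the two, $\tau_{\alpha}/\kappa_{\alpha}=A/[\kappa^{3}(1+f^{2})^{2}(1+\delta^{2})^{3/2}]$. Setting $P=\kappa^{3}(1+f^{2})^{2}(1+\delta^{2})^{3/2}$, this ratio is constant if and only if $(A/P)^{\prime}=0$, i.e. $A^{\prime}-A(P^{\prime}/P)=0$; and logarithmic differentiation gives $P^{\prime}/P=3\kappa^{\prime}/\kappa+4ff^{\prime}/(1+f^{2})+3\delta\delta^{\prime}/(1+\delta^{2})$, which is precisely the asserted equation.

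I expect the only delicate point to be the bookkeeping in the torsion step: keeping track of the half-integer powers of $1+f^{2}$ and the sign conventions of Remark 1 so that $A$ emerges without stray factors. Once the reduced forms of $\kappa_{\alpha}$ and $\tau_{\alpha}$ are established, the conclusion follows from this one-line logarithmic differentiation.
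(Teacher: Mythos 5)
Your proposal is correct and takes essentially the same route as the paper: both reduce $\tau _{\alpha }/\kappa _{\alpha }$ to $\pm A/\left[ \kappa ^{3}(1+f^{2})^{2}(1+\delta ^{2})^{3/2}\right] $ using the identity $\kappa ^{\prime }(f-g)=-\delta \kappa ^{2}(1+f^{2})^{3/2}$ from the Remark and then differentiate, your logarithmic differentiation being exactly the quotient-rule computation carried out in the paper's proof of (3.5). The sign difference ($+A$ in your reduction versus $-A$ in the paper) comes only from the paper's absolute-value/sign convention and has no effect on the constancy criterion.
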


\begin{proof}
If we take ratio of the torsion and curvature of the tangent indicatrix
which are in Theorem 3 , use the relation $\kappa ^{\prime }(f-g)=-\delta
\kappa ^{2}(1+f^{2})^{3/2}$ \ and make some appropriate calculations, we
have;%
\begin{equation}
\frac{\tau _{\alpha }}{\kappa _{\alpha }}=-\frac{A}{\kappa
^{3}(1+f^{2})^{2}(1+\delta ^{2})^{3/2}}.  \tag{3.5}
\end{equation}%
By differentiating the equation (3.5), we find that;%
\begin{equation*}
\left( \frac{\tau _{\alpha }}{\kappa _{\alpha }}\right) ^{\prime }=-\frac{%
A^{\prime }-A\left( \frac{3\kappa ^{\prime }}{\kappa }+\frac{4ff^{\prime }}{%
1+f^{2}}+\frac{3\delta \delta ^{\prime }}{1+\delta ^{2}}\right) }{\kappa
^{3}(1+f^{2})^{2}(1+\delta ^{2})^{3/2}}.
\end{equation*}%
If the numerator of the last fraction is zero, then $\left( \frac{\tau
_{\alpha }}{\kappa _{\alpha }}\right) ^{\prime }=0.$ Since the harmonic
curvature of the curve $\alpha $ is constant and it is a general helix.
\end{proof}

\begin{corollary}
If the curve $\beta $ is a general helix and and the equation $A^{\prime }-3A%
\frac{\kappa ^{\prime }}{\kappa }=0$ \ is satisfied, then the tangent
indicatrix of the W-direction curve $\overline{\beta }$ is a general helix.
\end{corollary}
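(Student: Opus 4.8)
The plan is to deduce the corollary as a direct specialization of the preceding theorem (Theorem 5), obtained by imposing the extra hypothesis that $\beta$ is a general helix. First, by Theorem 2, $\beta$ is a general helix exactly when $f=\frac{\tau}{\kappa}$ is constant, so that $f'=0$; this already annihilates the term $\frac{4ff'}{1+f^{2}}$ occurring in the characterizing equation of Theorem 5.

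Next I would observe that the same hypothesis forces $\delta$ to vanish identically. Indeed, by its definition in (1.1), $\delta=\frac{\kappa^{2}}{(\kappa^{2}+\tau^{2})^{3/2}}\left(\frac{\tau}{\kappa}\right)'$, so the constancy of $\frac{\tau}{\kappa}$ gives $\delta=0$, hence $\delta'=0$; equivalently, $\tau\kappa'-\tau'\kappa=0$ and $\overline{\kappa}=\frac{|\tau\kappa'-\tau'\kappa|}{\kappa^{2}+\tau^{2}}=0$. Therefore the term $\frac{3\delta\delta'}{1+\delta^{2}}$ appearing in Theorem 5 also drops out.

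With $f'=0$ and $\delta'=0$, the characterizing equation of Theorem 5 reduces to
\[
A'-3A\,\frac{\kappa'}{\kappa}=0,
\]
which is precisely the hypothesis of the corollary. Hence, by Theorem 5, the tangent indicatrix $\alpha$ of the W-direction curve $\overline{\beta}$ is a general helix, completing the proof.

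There is no genuine difficulty here; the only point that needs care is the bookkeeping on $\delta$, namely checking that the general-helix hypothesis on $\beta$ simultaneously kills both the $f'$-term and the $\delta'$-term, so that the full condition of Theorem 5 collapses to exactly $A'-3A\frac{\kappa'}{\kappa}=0$ and nothing else. One may also remark that, in this situation, $g=h=f$ and $\delta=0$, so that the quantity $A=\kappa''(f-h)+3\kappa\kappa'\sqrt{1+f^{2}}(1+fg)\delta$ vanishes identically and the stated hypothesis is automatically satisfied; but the cleanest write-up is simply to invoke Theorem 5 with the reduced condition.
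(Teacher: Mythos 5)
Your proposal is correct and follows essentially the same route as the paper: impose the general-helix hypothesis on $\beta$ so that $f'=0$ and $\delta=0$ (hence $\delta'=0$), whereupon the condition of the preceding theorem collapses to $A'-3A\frac{\kappa'}{\kappa}=0$, which is exactly the stated hypothesis. Your closing remark that in this situation $g=h=f$ and $\delta=0$ force $A\equiv 0$, making the hypothesis automatic, is a fair observation that goes slightly beyond what the paper records, but it does not change the argument.
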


\begin{proof}
If $\beta $ is a general helix, then $f$ is constant and also $f^{\prime
}=0. $ \ Since $f^{\prime }=0$, then \ $\delta =0.$ For the derivative, we
find;%
\begin{equation*}
\left( \frac{\tau _{\alpha }}{\kappa _{\alpha }}\right) ^{\prime }=-\frac{%
A^{\prime }-A\frac{3\kappa ^{\prime }}{\kappa }}{\kappa
^{3}(1+f^{2})^{2}(1+\delta ^{2})^{3/2}}.
\end{equation*}%
If the numerator of this fraction is zero, then we reach the result clearly.
\end{proof}

\begin{theorem}
The tangent indicatrix curve $\alpha $ of the W-direction curve is a slant
helix if and only if the following equation is satisfied;%
\begin{equation*}
\delta ^{\prime }(1+4\delta ^{2})\frac{X}{Y}+\delta (1+\delta ^{2})\left( 
\frac{X}{Y}\right) ^{\prime }=0
\end{equation*}%
where $\delta =$\ $\frac{\overline{\kappa }}{\overline{\tau }}$ $\ ,$ $\
X=(1+\delta ^{2})\overline{\tau }(\delta ^{\prime \prime }\overline{\tau }%
-\delta ^{\prime }\overline{\tau }^{\prime })-3\delta (\delta ^{\prime })^{2}%
\overline{\tau }^{2}$ $\ $and $\ Y=\left( \overline{\tau }^{2}(1+\delta
^{2})^{3}+(\delta ^{\prime })^{2}\right) ^{3/2}.$
\end{theorem}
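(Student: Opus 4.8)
\textit{Proof proposal.} The approach is to apply the Izumiya--Takeuchi criterion \cite{Izu2} directly to the curve $\alpha $: write $\sigma _{\alpha }$ for the quantity (1.1) formed with the curvature $\kappa _{\alpha }$ and torsion $\tau _{\alpha }$ of $\alpha $ in place of $\kappa ,\tau $ (the geodesic curvature of the principal image of the principal normal indicatrix of $\alpha $). Then $\alpha $ is a slant helix if and only if $\sigma _{\alpha }$ is constant, i.e. $\sigma _{\alpha }^{\prime }=0$. So the entire task reduces to expressing $\sigma _{\alpha }$ in terms of $\delta =\overline{\kappa }/\overline{\tau }$ and $\overline{\tau }$ and then computing its derivative. (Here $\delta \neq 0$ since $\beta $ is not a general helix, so all the divisions below are legitimate.)

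First I would recast the curvature and torsion of $\alpha $ obtained in the proof of Theorem 3, namely (3.3) and (3.4), entirely in terms of $\delta $ and $\overline{\tau }$ by substituting $\overline{\kappa }=\delta \overline{\tau }$. A short computation gives $\kappa _{\alpha }=\frac{\sqrt{1+\delta ^{2}}}{\delta }$ and $\tau _{\alpha }=\frac{\delta ^{\prime }}{\delta \overline{\tau }(1+\delta ^{2})}$ (with the sign convention of Remark 1), whence
\[
\kappa _{\alpha }^{2}+\tau _{\alpha }^{2}=\frac{\overline{\tau }^{2}(1+\delta ^{2})^{3}+(\delta ^{\prime })^{2}}{\delta ^{2}\overline{\tau }^{2}(1+\delta ^{2})^{2}},\qquad \frac{\tau _{\alpha }}{\kappa _{\alpha }}=\frac{\delta ^{\prime }}{\overline{\tau }(1+\delta ^{2})^{3/2}}.
\]
Thus $(\kappa _{\alpha }^{2}+\tau _{\alpha }^{2})^{3/2}=Y/(\delta ^{3}\overline{\tau }^{3}(1+\delta ^{2})^{3})$ with $Y$ as in the statement, and consequently $\kappa _{\alpha }^{2}/(\kappa _{\alpha }^{2}+\tau _{\alpha }^{2})^{3/2}=\delta \,\overline{\tau }^{3}(1+\delta ^{2})^{4}/Y$.

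Next I would differentiate $\tau _{\alpha }/\kappa _{\alpha }$ by the quotient rule; pulling a factor $(1+\delta ^{2})^{1/2}$ out of the numerator, the surviving bracket is exactly $X/\overline{\tau }$ with $X$ as in the statement, so that $\left( \tau _{\alpha }/\kappa _{\alpha }\right) ^{\prime }=X/(\overline{\tau }^{3}(1+\delta ^{2})^{5/2})$. Multiplying by the factor computed above, the powers of $\overline{\tau }$ cancel and the powers of $1+\delta ^{2}$ collapse, giving the compact form
\[
\sigma _{\alpha }=\delta (1+\delta ^{2})^{3/2}\,\frac{X}{Y}.
\]

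Finally I would differentiate this product. Using $[\delta (1+\delta ^{2})^{3/2}]^{\prime }=(1+\delta ^{2})^{1/2}\delta ^{\prime }(1+4\delta ^{2})$ one gets
\[
\sigma _{\alpha }^{\prime }=(1+\delta ^{2})^{1/2}\left[ \delta ^{\prime }(1+4\delta ^{2})\frac{X}{Y}+\delta (1+\delta ^{2})\left( \frac{X}{Y}\right) ^{\prime }\right] ,
\]
and since $(1+\delta ^{2})^{1/2}$ never vanishes, $\sigma _{\alpha }^{\prime }=0$ is equivalent to the asserted equation. The main obstacle is purely computational: tracking the powers of $\overline{\tau }$ and $1+\delta ^{2}$ through the cancellations, and recognizing that the leftover bracket after differentiating $\tau _{\alpha }/\kappa _{\alpha }$ is precisely $X/\overline{\tau }$. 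The sign ambiguities from Remark 1 are harmless, since the final relation is homogeneous in $X/Y$.
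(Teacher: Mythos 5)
Your proposal is correct and follows essentially the same route as the paper: both apply the criterion (1.1) to $\alpha$, rewrite $\kappa _{\alpha }$, $\tau _{\alpha }$ from (3.3)--(3.4) in terms of $\delta $ and $\overline{\tau }$, arrive at the same intermediate expression $\delta _{\alpha }=\delta (1+\delta ^{2})^{3/2}X/Y$, and differentiate to get the stated condition. The only cosmetic difference is that you substitute $\overline{\kappa }=\delta \overline{\tau }$ directly, while the paper uses the equivalent identities $\overline{\tau }^{\prime }\overline{\kappa }-\overline{\tau }\,\overline{\kappa }^{\prime }=-\overline{\tau }^{2}\delta ^{\prime }$ and $\overline{\kappa }^{2}+\overline{\tau }^{2}=\overline{\tau }^{2}(1+\delta ^{2})$.
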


\begin{proof}
From the equation (1.1), the geodesic curvature of the principal image of
the principal normal indicatrix of the tangent indicatrix curve $\alpha $ is
given by%
\begin{equation}
\delta _{\alpha }=\frac{\kappa _{\alpha }^{2}}{(\kappa _{\alpha }^{2}+\tau
_{\alpha }^{2})^{\frac{3}{2}}}.\left( \frac{\tau _{\alpha }}{\kappa _{\alpha
}}\right) ^{\prime }.  \tag{3.6}
\end{equation}%
We take into account that \ $\overline{\tau }^{\prime }\overline{\kappa }-%
\overline{\tau }\overline{\kappa }^{\prime }=-\overline{\tau }^{2}\delta
^{\prime }$ \ , \ \ $\overline{\kappa }^{\prime }\frac{\overline{\tau }^{2}}{%
\overline{\kappa }}-\overline{\tau }\overline{\tau }^{\prime }=\overline{%
\tau }^{2}\frac{\delta ^{\prime }}{\delta }$ \ and \ $\overline{\kappa }^{2}+%
\overline{\tau }^{2}=\overline{\tau }^{2}(1+\delta ^{2})$\ , then put the
equations (3.3) and (3.4) in (3.6), we clearly find%
\begin{equation*}
\delta _{\alpha }=\frac{1+\delta ^{2}}{\delta ^{2}}\left( \frac{\delta
^{2}(1+\delta ^{2})^{2}\overline{\tau }^{2}}{(1+\delta ^{2})^{3}\overline{%
\tau }^{2}+(\delta ^{\prime })^{2}}\right) ^{3/2}\left( \frac{\delta
^{\prime }}{\overline{\tau }(1+\delta ^{2})^{3/2}}\right) ^{\prime }.
\end{equation*}%
After some calculations we have%
\begin{equation}
\delta _{\alpha }=\frac{\delta (1+\delta ^{2})^{3/2}.X}{Y}  \tag{3.7}
\end{equation}%
where $X=(1+\delta ^{2})\overline{\tau }(\delta ^{\prime \prime }\overline{%
\tau }-\delta ^{\prime }\overline{\tau }^{\prime })-3\delta (\delta ^{\prime
})^{2}\overline{\tau }^{2}$ \ \ and $\ Y=\left( \overline{\tau }%
^{2}(1+\delta ^{2})^{3}+(\delta ^{\prime })^{2}\right) ^{3/2}.$

By differentiating the equation (3.7), we obtain%
\begin{equation*}
\left( \delta _{\alpha }\right) ^{\prime }=(1+\delta ^{2})^{1/2}\left(
\delta ^{\prime }(1+4\delta ^{2})\frac{X}{Y}+\delta (1+\delta ^{2})\left( 
\frac{X}{Y}\right) ^{\prime }\right) .
\end{equation*}%
$\left( \delta _{\alpha }\right) ^{\prime }=0$ \ if and only if $\delta
^{\prime }(1+4\delta ^{2})\frac{X}{Y}+\delta (1+\delta ^{2})\left( \frac{X}{Y%
}\right) ^{\prime }=0.$

So by taking into consideration the equation (1.1) , the proof is completed.
\end{proof}

\begin{theorem}
Let $\beta $ be a curve with arc-length parameter $s$ and $\overline{\beta }$
be the W-direction curve of $\beta .$ The Frenet vector fields, curvature
and torsion of the binormal indicatrix curve $\gamma $ of W-direction curve\
are given by%
\begin{eqnarray*}
T_{\gamma } &=&\frac{1}{\sqrt{1+f^{2}}}T-\frac{f}{\sqrt{1+f^{2}}}B \\
N_{\gamma } &=&\frac{\kappa ^{\prime }(f-g)}{\sqrt{(\kappa ^{\prime
})^{2}(f-g)^{2}(1+f^{2})+\kappa ^{4}(1+f^{2})^{4}}}\left( fT+\frac{\kappa
^{2}(1+f^{2})^{2}}{\kappa ^{\prime }(f-g)}N+B\right)  \\
B_{\gamma } &=&\frac{\kappa (1+f^{2})}{\sqrt{(\kappa ^{\prime
})^{2}(f-g)^{2}+\kappa ^{4}(1+f^{2})^{3}}}\left( \tau T-\frac{\kappa
^{\prime }(f-g)}{\kappa (1+f^{2})}N+\kappa B\right)  \\
\kappa _{\gamma } &=&\sqrt{1+\frac{(\kappa ^{\prime })^{2}(f-g)^{2}}{\kappa
^{4}(1+f^{2})^{3}}} \\
\tau _{\gamma } &=&\frac{1}{(\kappa ^{\prime })^{2}(f-g)^{2}+\kappa
^{4}(1+f^{2})^{3}}\left( 3(\kappa ^{\prime })^{2}(1+fg)(f-g)-\kappa \kappa
^{\prime \prime }(f-h)(1+f^{2})\right) 
\end{eqnarray*}%
where $f=\frac{\tau }{\kappa }$, \ \ $g=\frac{\tau ^{\prime }}{\kappa
^{\prime }}$ \ and \ $h=\frac{\tau ^{\prime \prime }}{\kappa ^{\prime \prime
}}.$
\end{theorem}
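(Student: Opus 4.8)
The plan is to carry out, step for step, the computation used for the tangent indicatrix in Theorem 3, but now starting from the binormal indicatrix $\gamma(s_{\gamma})=\overline{B}(s)$ of (3.2). First I would differentiate (3.2) and use the Frenet formula $\overline{B}^{\prime}=-\overline{\tau}\,\overline{N}$ of the W-direction curve, which gives $\frac{ds_{\gamma}}{ds}=\overline{\tau}$ (with the positive sign, as in Remark 1, since $\overline{\tau}=\sqrt{\kappa^{2}+\tau^{2}}>0$) and hence $T_{\gamma}=-\overline{N}$. Differentiating $T_{\gamma}=-\overline{N}$ once more and dividing by $\frac{ds_{\gamma}}{ds}=\overline{\tau}$ yields $\frac{dT_{\gamma}}{ds_{\gamma}}=\frac{1}{\overline{\tau}}(\overline{\kappa}\,\overline{T}-\overline{\tau}\,\overline{B})$, so that $\kappa_{\gamma}=\frac{\sqrt{\overline{\kappa}^{2}+\overline{\tau}^{2}}}{\overline{\tau}}$ and $N_{\gamma}=\frac{1}{\sqrt{\overline{\kappa}^{2}+\overline{\tau}^{2}}}(\overline{\kappa}\,\overline{T}-\overline{\tau}\,\overline{B})$. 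Then $B_{\gamma}=T_{\gamma}\times N_{\gamma}$, worked out with $\overline{N}\times\overline{T}=-\overline{B}$ and $\overline{N}\times\overline{B}=\overline{T}$, comes out as $B_{\gamma}=\frac{1}{\sqrt{\overline{\kappa}^{2}+\overline{\tau}^{2}}}(\overline{\tau}\,\overline{T}+\overline{\kappa}\,\overline{B})$, which is exactly the binormal $B_{\alpha}$ of the tangent indicatrix in Theorem 3.

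For the torsion I would use $\tau_{\gamma}=-\langle \frac{dB_{\gamma}}{ds_{\gamma}},N_{\gamma}\rangle$. The simplification that keeps this short is that the two $\overline{N}$-contributions produced by $\overline{T}^{\prime}=\overline{\kappa}\,\overline{N}$ and $\overline{B}^{\prime}=-\overline{\tau}\,\overline{N}$ cancel, so $\frac{d}{ds}(\overline{\tau}\,\overline{T}+\overline{\kappa}\,\overline{B})=\overline{\tau}^{\prime}\,\overline{T}+\overline{\kappa}^{\prime}\,\overline{B}$; and the term coming from differentiating the normalising factor $\sqrt{\overline{\kappa}^{2}+\overline{\tau}^{2}}$ is a multiple of $\overline{\tau}\,\overline{T}+\overline{\kappa}\,\overline{B}$, which is orthogonal to $N_{\gamma}$ and hence drops out of the inner product. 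What survives is $\tau_{\gamma}=\frac{\overline{\kappa}^{\prime}\overline{\tau}-\overline{\kappa}\,\overline{\tau}^{\prime}}{\overline{\tau}(\overline{\kappa}^{2}+\overline{\tau}^{2})}$, which one may equally record in a radical form parallel to (3.4).

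The remaining and main work is purely algebraic: substitute the W-direction relations (2.3), $\overline{\kappa}=\frac{\tau\kappa^{\prime}-\tau^{\prime}\kappa}{\kappa^{2}+\tau^{2}}$ and $\overline{\tau}=\sqrt{\kappa^{2}+\tau^{2}}$, together with $\overline{\kappa}^{\prime}$ and $\overline{\tau}^{\prime}$, into the five expressions above, then pass to $f=\frac{\tau}{\kappa}$, $g=\frac{\tau^{\prime}}{\kappa^{\prime}}$, $h=\frac{\tau^{\prime\prime}}{\kappa^{\prime\prime}}$ using the identities $\tau\kappa^{\prime}-\tau^{\prime}\kappa=\kappa\kappa^{\prime}(f-g)$, $\tau\kappa^{\prime\prime}-\tau^{\prime\prime}\kappa=\kappa\kappa^{\prime\prime}(f-h)$, $\kappa\kappa^{\prime}+\tau\tau^{\prime}=\kappa\kappa^{\prime}(1+fg)$, $\kappa^{2}+\tau^{2}=\kappa^{2}(1+f^{2})$, and cancel the common powers of $\kappa$ in numerator and denominator (a factor $\kappa^{2}$ divides out of $\tau_{\gamma}$). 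This reproduces the five listed expressions for $T_{\gamma},N_{\gamma},B_{\gamma},\kappa_{\gamma},\tau_{\gamma}$. I expect the torsion reduction to be the only delicate step: one must differentiate $\overline{\kappa}$ with care and keep track of the overall sign, which — exactly as in Remark 1 — is fixed by taking the absolute values positive, the opposite convention merely flipping the sign of $\tau_{\gamma}$.

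A shortcut is worth noting. Since $\overline{T}^{\prime}=\overline{\kappa}\,\overline{N}$ and $\overline{B}^{\prime}=-\overline{\tau}\,\overline{N}$ are both along $\overline{N}$, the tangent and binormal indicatrices of $\overline{\beta}$ have anti-parallel unit tangents, $T_{\gamma}=-T_{\alpha}$, and $\frac{dT_{\gamma}}{ds_{\gamma}}$ is a negative multiple of $\frac{dT_{\alpha}}{ds_{\alpha}}$; hence $N_{\gamma}=-N_{\alpha}$ and $B_{\gamma}=B_{\alpha}$. Thus three of the five quantities can be read off Theorem 3 directly, and only $\kappa_{\gamma}$ (which equals $\frac{\overline{\kappa}}{\overline{\tau}}\,\kappa_{\alpha}$) and $\tau_{\gamma}$ need the fresh computation sketched above.
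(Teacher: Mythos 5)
Your proposal is correct and follows essentially the same route as the paper: differentiate $\gamma=\overline{B}$ to get $\frac{ds_{\gamma}}{ds}=\overline{\tau}$, express $T_{\gamma},N_{\gamma},B_{\gamma},\kappa_{\gamma},\tau_{\gamma}$ in the barred apparatus (your intermediate formulas, including $\kappa_{\gamma}=\frac{\sqrt{\overline{\kappa}^{2}+\overline{\tau}^{2}}}{\overline{\tau}}$ and a torsion equivalent to (3.9)), then substitute (2.3) and pass to $f,g,h$. The only differences are cosmetic: you obtain the torsion in signed form via $\tau_{\gamma}=-\left\langle \frac{dB_{\gamma}}{ds_{\gamma}},N_{\gamma}\right\rangle$ (so your result agrees with the stated formula up to the overall sign that the paper fixes through the absolute-value convention of Remark 1, whereas the paper works from the unsigned expression (3.9)), and your shortcut reading $T_{\gamma}=-T_{\alpha}$, $N_{\gamma}=-N_{\alpha}$, $B_{\gamma}=B_{\alpha}$ off Theorem 3 is a harmless economy.
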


\begin{proof}
The equation of binormal indicatrix curve $\gamma $ \ is given in equation
(3.2) with the arc-length parameter $s_{\gamma }$. By differentiating
equation (3.2) and using Frenet formulas we get%
\begin{equation*}
\frac{ds_{\gamma }}{ds}=\left\vert \overline{\tau }\right\vert .
\end{equation*}%
Here we assume $\overline{\tau }\rangle 0$. \ If \ $\overline{\tau }\langle
0,$ the Frenet vector fields have other signs.

If we use the equations in (2.1) and the relation $\frac{ds_{\gamma }}{ds}=%
\overline{\tau }$, we find the tangent, prinicipal and binormal vector
fields respectively as;%
\begin{eqnarray*}
T_{\gamma } &=&-\overline{N} \\
N_{\gamma } &=&\frac{\overline{\kappa }}{\sqrt{\overline{\kappa }^{2}+%
\overline{\tau }^{2}}}\overline{T}-\frac{\overline{\tau }}{\sqrt{\overline{%
\kappa }^{2}+\overline{\tau }^{2}}}\overline{B} \\
B_{\gamma } &=&\frac{\overline{\tau }}{\sqrt{\overline{\kappa }^{2}+%
\overline{\tau }^{2}}}\overline{T}+\frac{\overline{\kappa }}{\sqrt{\overline{%
\kappa }^{2}+\overline{\tau }^{2}}}\overline{B}.
\end{eqnarray*}%
By writing the relations (2.3) in the last equations, we have;%
\begin{eqnarray*}
T_{\gamma } &=&\frac{\kappa }{\sqrt{\kappa ^{2}+\tau ^{2}}}T-\frac{\tau }{%
\sqrt{\kappa ^{2}+\tau ^{2}}}B \\
N_{\gamma } &=&\frac{\tau \kappa ^{\prime }-\tau ^{\prime }\kappa }{\sqrt{%
(\tau \kappa ^{\prime }-\tau ^{\prime }\kappa )^{2}+(\kappa ^{2}+\tau
^{2})^{3}}}\left( \frac{\tau }{\sqrt{\kappa ^{2}+\tau ^{2}}}T+\frac{\sqrt{%
(\kappa ^{2}+\tau ^{2})^{3}}}{\tau \kappa ^{\prime }-\tau ^{\prime }\kappa }%
N+\frac{\kappa }{\sqrt{\kappa ^{2}+\tau ^{2}}}B\right) \\
B_{\gamma } &=&\frac{\kappa ^{2}+\tau ^{2}}{\sqrt{(\tau \kappa ^{\prime
}-\tau ^{\prime }\kappa )^{2}+(\kappa ^{2}+\tau ^{2})^{3}}}\left( \tau T-%
\frac{\tau \kappa ^{\prime }-\tau ^{\prime }\kappa }{\kappa ^{2}+\tau ^{2}}%
N+\kappa B\right) .
\end{eqnarray*}%
In the last equations assuming $f=\frac{\tau }{\kappa }$, \ \ $g=\frac{\tau
^{\prime }}{\kappa ^{\prime }}$ \ and \ $h=\frac{\tau ^{\prime \prime }}{%
\kappa ^{\prime \prime }}$ \ , \ arranging the expressions , we obtain the
tangent, principal and binormal vector fields of the binormal indicatrix
curve $\gamma $ of the W-direction curve $\overline{\beta },$ with respect
to the main curve $\beta .$

Also the curvature and torsion of the binormal indicatrix curve $\gamma $ by
taking into account that $\frac{ds_{\gamma }}{ds}=\overline{\tau }$ are
found as;%
\begin{eqnarray}
\kappa _{\gamma } &=&\frac{\sqrt{\overline{\kappa }^{2}+\overline{\tau }^{2}}%
}{\overline{\tau }}  \TCItag{3.8} \\
\tau _{\gamma } &=&\sqrt{\frac{(\overline{\tau }\overline{\kappa }^{\prime }-%
\overline{\tau }^{\prime }\overline{\kappa })^{2}+\left( \overline{\tau }%
^{\prime }\frac{\overline{\kappa }^{2}}{\overline{\tau }}-\overline{\kappa }%
\overline{\kappa }^{\prime }\right) ^{2}}{\left( \overline{\kappa }^{2}+%
\overline{\tau }^{2}\right) ^{3}}}.  \TCItag{3.9}
\end{eqnarray}%
Again by using the relations (2.3) in equations (3.8) and (3.9), we get;%
\begin{eqnarray*}
\kappa _{\gamma } &=&\frac{\sqrt{(\tau \kappa ^{\prime }-\tau ^{\prime
}\kappa )^{2}+(\kappa ^{2}+\tau ^{2})^{3}}}{\sqrt{(\kappa ^{2}+\tau ^{2})^{3}%
}} \\
\tau _{\gamma } &=&\frac{1}{(\tau \kappa ^{\prime }-\tau ^{\prime }\kappa
)^{2}+(\kappa ^{2}+\tau ^{2})^{3}}\left( 
\begin{array}{c}
3(\kappa \kappa ^{\prime }+\tau \tau ^{\prime })(\tau \kappa ^{\prime }-\tau
^{\prime }\kappa ) \\ 
-(\tau \kappa ^{\prime \prime }-\tau ^{\prime \prime }\kappa )(\kappa
^{2}+\tau ^{2})%
\end{array}%
\right) .
\end{eqnarray*}%
Taking $f,$ $g$ \ and \ $h$ \ in the last equations, we reach the result.
\end{proof}

\begin{corollary}
The tangent indicatrix curve and the binormal indicatrix curve of a
W-direction curve are Bertrand mate curves.
\end{corollary}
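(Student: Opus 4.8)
The plan is to read the corollary off directly from the Frenet data already computed in Theorem~3 and Theorem~8, rather than trying to verify a curvature--torsion relation of Bertrand type. Recall that two curves in $\mathbb{E}^{3}$ are said to be Bertrand mate curves when, under a suitable correspondence between them, their unit principal normal vector fields are linearly dependent at corresponding points (equivalently, they share a common principal normal line). So the whole argument reduces to comparing the principal normals of $\alpha$ and $\gamma$.

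First I would fix the correspondence: both the tangent indicatrix $\alpha$ and the binormal indicatrix $\gamma$ are reparametrizations of curves naturally carried along by the common arc-length parameter $s$ of the base curve $\beta$ (via $\frac{ds_{\alpha}}{ds}=\overline{\kappa}$ and $\frac{ds_{\gamma}}{ds}=\overline{\tau}$), so that ``corresponding points'' means points with the same value of $s$. Then I would simply place the formula for $N_{\alpha}$ from Theorem~3 next to the formula for $N_{\gamma}$ from Theorem~8: the scalar normalising factor $\frac{\kappa'(f-g)}{\sqrt{(\kappa')^{2}(f-g)^{2}(1+f^{2})+\kappa^{4}(1+f^{2})^{4}}}$ and the vector $\left(fT+\frac{\kappa^{2}(1+f^{2})^{2}}{\kappa'(f-g)}N+B\right)$ occurring in the two expressions are literally the same, and the only difference is an overall sign. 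Hence $N_{\gamma}=-N_{\alpha}$ for every $s$, so $\alpha$ and $\gamma$ have the same principal normal line at corresponding points, which is exactly the defining property of a Bertrand pair, and the corollary follows.

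As a consistency check I would also note that the same side-by-side comparison yields $T_{\gamma}=-T_{\alpha}$ and $B_{\gamma}=B_{\alpha}$, which is compatible with $B=T\times N$, while $\kappa_{\alpha}\neq\kappa_{\gamma}$ and $\tau_{\alpha}\neq\tau_{\gamma}$ in general; thus $\alpha$ and $\gamma$ are genuinely distinct curves sharing a common principal normal field, i.e.\ a Bertrand pair and not the same curve. The only real subtlety, and the point at which I would be most careful, is the bookkeeping of the two distinct arc-length parameters $s_{\alpha}$, $s_{\gamma}$ together with the sign conventions flagged in Remark~1 and in the proof of Theorem~8 (the case $\overline{\tau}<0$); once those are pinned down, the identity $N_{\gamma}=-N_{\alpha}$ is immediate from the two preceding theorems and no further computation is required.
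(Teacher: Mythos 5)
Your proof is correct and is essentially the paper's own argument: the paper likewise just reads off from the two preceding theorems that $N_{\alpha}=-N_{\gamma}$ at corresponding parameter values, concludes the principal normals are linearly dependent, and invokes the common-principal-normal characterization of Bertrand mates. Your extra consistency checks ($T_{\gamma}=-T_{\alpha}$, $B_{\gamma}=B_{\alpha}$) go slightly beyond the paper but do not change the route.
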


\begin{proof}
Since the relation between the principal normal vector fields of the tangent
indicatrix and binormal indicatrix is;%
\begin{equation*}
N_{\alpha }=-N_{\gamma },
\end{equation*}%
they are linearly dependent.
\end{proof}

\begin{theorem}
If any curve $\beta $ with arc-length parameter $s$ is slant helix, then the
binormal indicatrix of W-direction curve of $\beta $ is a planar curve.
\end{theorem}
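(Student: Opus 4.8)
The plan is to follow the same route that works for the tangent indicatrix (the theorem asserting that a slant helix $\beta$ produces a planar tangent indicatrix of $\overline{\beta}$): I would show that the torsion $\tau_\gamma$ of the binormal indicatrix $\gamma$ vanishes identically whenever $\beta$ is a slant helix, and then invoke the fact that a curve in $\mathbb{E}^{3}$ with identically zero torsion lies in a plane.

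The computational heart is the following. By Theorem~6 in \cite{Mac}, if $\delta$ denotes the geodesic curvature (1.1) of the principal image of the principal normal indicatrix of $\beta$, then the W-direction curve $\overline{\beta}$ satisfies $\overline{\kappa}/\overline{\tau}=\delta$. Writing $\overline{\kappa}=\delta\,\overline{\tau}$ and differentiating gives $\overline{\kappa}'=\delta'\overline{\tau}+\delta\,\overline{\tau}'$. Substituting these into the two bracketed quantities occurring under the radical in (3.9), namely $\overline{\tau}\,\overline{\kappa}'-\overline{\tau}'\,\overline{\kappa}$ and $\overline{\tau}'\,\overline{\kappa}^{2}/\overline{\tau}-\overline{\kappa}\,\overline{\kappa}'$, a short computation collapses them to $\delta'\overline{\tau}^{2}$ and $-\delta\delta'\overline{\tau}^{2}$, respectively, while the denominator becomes $(\overline{\kappa}^{2}+\overline{\tau}^{2})^{3}=\overline{\tau}^{6}(1+\delta^{2})^{3}$. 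Feeding this back into (3.9) yields the clean expression
\[
\tau_\gamma=\frac{\delta'}{\overline{\tau}\,(1+\delta^{2})},
\]
in complete analogy with the formula $\tau_\alpha=\delta'/(\overline{\tau}\,\delta(1+\delta^{2}))$ that appeared in the tangent-indicatrix case. Now if $\beta$ is a slant helix then, by the Izumiya and Takeuchi criterion recalled in the Introduction, $\delta$ is a constant function, so $\delta'=0$ and hence $\tau_\gamma=0$, i.e. $\gamma$ is planar.

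I do not expect a genuine obstacle here: the only points requiring care are the algebraic simplification just described and the bookkeeping of the sign/absolute-value conventions fixed in Remark~1 (exactly as in the tangent-indicatrix proof, where the case $\overline{\tau}<0$ merely reverses some signs). As an even shorter alternative, one may observe that the explicit formulas in Theorem~3 and Theorem~7 give $B_\gamma=B_\alpha$; hence, once the tangent-indicatrix theorem guarantees that $\alpha$ is planar — so that $B_\alpha$ is a constant vector — the same vector $B_\gamma$ is constant, which again forces $\gamma$ to be planar.
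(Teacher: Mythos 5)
Your argument is exactly the paper's proof: substituting $\overline{\kappa}=\delta\,\overline{\tau}$ into (3.9) to obtain $\tau_{\gamma}=\delta'/(\overline{\tau}(1+\delta^{2}))$ and then using that $\delta'=0$ for a slant helix, so $\gamma$ is planar. Your computation of the intermediate quantities is correct, and the shorter alternative via $B_{\gamma}=B_{\alpha}$ is a valid extra observation, but the main route coincides with the paper's.
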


\begin{proof}
By using the equation (3.9) and $\frac{\overline{\kappa }}{\overline{\tau }}%
=\delta $, the torsion of the binormal indicatrix curve $\gamma $ is found
as;%
\begin{equation*}
\tau _{\gamma }=\frac{\delta ^{\prime }}{\overline{\tau }\text{ }(1+\delta
^{2})}.
\end{equation*}%
If the curve $\beta $ is slant helix, then $\delta ^{\prime }=0$. So by the
last equation $\tau _{\gamma }=0$ which means the binormal indicatrix curve $%
\gamma $ is planar.
\end{proof}

\begin{theorem}
The binormal indicatrix curve $\gamma $ of the W-direction curve is a
general helix if and only if the following equation is satisfied;%
\begin{equation*}
A^{\prime }-A\left( \frac{3\kappa ^{\prime }}{\kappa }+\frac{4ff^{\prime }}{%
1+f^{2}}+\frac{3\delta \delta ^{\prime }}{1+\delta ^{2}}\right) =0
\end{equation*}%
where \ $A=\kappa ^{\prime \prime }(f-h)+3\kappa \kappa ^{\prime }\sqrt{%
1+f^{2}}(1+fg)\delta .$
\end{theorem}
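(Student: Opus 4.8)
The plan is to mirror the proof of Theorem~7 almost verbatim, exploiting the fact (Corollary~2) that the tangent indicatrix $\alpha$ and the binormal indicatrix $\gamma$ of the W-direction curve share the same principal normal up to sign, $N_\alpha = -N_\gamma$. Consequently $\alpha$ is a general helix if and only if $\gamma$ is — being a general helix is characterized (Theorem~2) by constancy of $\tau/\kappa$, i.e. of the harmonic curvature, and this quantity is built from the curve's tangent and binormal directions, which are interchanged only in sign between the two indicatrices. So the shortest route is simply to observe that the condition ``$\alpha$ is a general helix'' was shown in Theorem~6 to be equivalent to $A' - A\bigl(\tfrac{3\kappa'}{\kappa} + \tfrac{4ff'}{1+f^2} + \tfrac{3\delta\delta'}{1+\delta^2}\bigr)=0$, and since $\gamma$ is a general helix exactly when $\alpha$ is, the same equation characterizes the general-helix property of $\gamma$.

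If instead one wants a self-contained computation paralleling the earlier proofs, the steps are: first, form the ratio $\tau_\gamma/\kappa_\gamma$ from the explicit formulas in Theorem~8. Using $\kappa_\gamma = \sqrt{1 + (\kappa')^2(f-g)^2/\kappa^4(1+f^2)^3}$ and the displayed $\tau_\gamma$, together with the substitution $\kappa'(f-g) = -\delta\kappa^2(1+f^2)^{3/2}$ from Remark~3, simplify to obtain an expression of the form $\tau_\gamma/\kappa_\gamma = -A / \bigl(\kappa^3(1+f^2)^2(1+\delta^2)^{3/2}\bigr)$ with the same $A = \kappa''(f-h) + 3\kappa\kappa'\sqrt{1+f^2}(1+fg)\delta$ as in Theorem~6. (This is expected: the denominators of $\tau_\alpha/\kappa_\alpha$ and $\tau_\gamma/\kappa_\gamma$ differ only by the overall factor that cancels in forming the harmonic curvature, because $\kappa_\alpha = \sqrt{\overline\kappa^2+\overline\tau^2}/\overline\kappa$ while $\kappa_\gamma = \sqrt{\overline\kappa^2+\overline\tau^2}/\overline\tau$, and the numerator $\tau_\gamma$ is just $\tau_\alpha$ rescaled.) Then differentiate: $\bigl(\tau_\gamma/\kappa_\gamma\bigr)' = -\bigl(A' - A(\tfrac{3\kappa'}{\kappa} + \tfrac{4ff'}{1+f^2} + \tfrac{3\delta\delta'}{1+\delta^2})\bigr)\big/\bigl(\kappa^3(1+f^2)^2(1+\delta^2)^{3/2}\bigr)$, so this vanishes, and hence (Theorem~2) $\gamma$ is a general helix, precisely when the stated equation holds.

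The main obstacle, as in the earlier theorems, is purely the algebraic bookkeeping in showing that the ratio $\tau_\gamma/\kappa_\gamma$ collapses to exactly $-A/\bigl(\kappa^3(1+f^2)^2(1+\delta^2)^{3/2}\bigr)$; one must correctly track the factor $\kappa^2+\tau^2 = \kappa^2(1+f^2)$ and the substitution for $\kappa'(f-g)$, and verify that the cross term $(\kappa\kappa'+\tau\tau')(\tau\kappa'-\tau'\kappa)$ in $\tau_\gamma$ rewrites as $\kappa^2\kappa\kappa'(1+fg)(f-g)$ up to the appropriate power of $\kappa$. Since that manipulation was already carried out for $\tau_\alpha$ in the proof of Theorem~6 and the only difference here is the cancellation of a single factor of $\kappa_\gamma/\kappa_\alpha = \overline\kappa/\overline\tau = \delta$ (which does not survive in the harmonic curvature), I would simply invoke the computation in the proof of Theorem~6 rather than repeat it, and state that the derivative calculation is identical. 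I would write the proof in two sentences: compute $\tau_\gamma/\kappa_\gamma$ and note it equals $\tau_\alpha/\kappa_\alpha$ by Theorem~8 combined with $N_\alpha=-N_\gamma$ (or directly from the formulas), then quote the derivative identity from the proof of Theorem~6 to conclude.
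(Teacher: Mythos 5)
Your fallback computation is exactly the paper's proof: form $\tau_\gamma/\kappa_\gamma$ from the binormal-indicatrix apparatus (Theorem 7), substitute $\kappa'(f-g)=-\delta\kappa^{2}(1+f^{2})^{3/2}$ to obtain the same ratio $-A/\bigl(\kappa^{3}(1+f^{2})^{2}(1+\delta^{2})^{3/2}\bigr)$ as in (3.5), differentiate, and conclude from the constancy of $\tau_\gamma/\kappa_\gamma$. One caveat: your shortcut route leans on the equivalence that $\gamma$ is a general helix iff $\alpha$ is, which in the paper is Corollary 3 and is itself deduced by comparing (3.5) with (3.10); justifying it beforehand from $N_\alpha=-N_\gamma$ alone is not sufficient (shared principal normals do not by themselves transfer the helix property), whereas the stronger observation $T_\gamma=-T_\alpha$ would — but since you carry out the direct computation anyway, the proof stands.
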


\begin{proof}
If we take ratio of the torsion and curvature of the binormal indicatrix
which are in Theorem 7 , use the relation $\kappa ^{\prime }(f-g)=-\delta
\kappa ^{2}(1+f^{2})^{3/2}$ \ and make some appropriate calculations, we
have;%
\begin{equation}
\frac{\tau _{\gamma }}{\kappa _{\gamma }}=-\frac{A}{\kappa
^{3}(1+f^{2})^{2}(1+\delta ^{2})^{3/2}}.  \tag{3.10}
\end{equation}%
By differentiating the equation (3.10), we find that;%
\begin{equation*}
\left( \frac{\tau _{\gamma }}{\kappa _{\gamma }}\right) ^{\prime }=-\frac{%
A^{\prime }-A\left( \frac{3\kappa ^{\prime }}{\kappa }+\frac{4ff^{\prime }}{%
1+f^{2}}+\frac{3\delta \delta ^{\prime }}{1+\delta ^{2}}\right) }{\kappa
^{3}(1+f^{2})^{2}(1+\delta ^{2})^{3/2}}.
\end{equation*}%
If the numerator of the last fraction is zero, then $\left( \frac{\tau
_{\alpha }}{\kappa _{\alpha }}\right) ^{\prime }=0.$ Since the harmonic
curvature of the curve $\gamma $ is constant, it is a general helix.
\end{proof}

\begin{corollary}
Let $\beta $ be a curve with arc-length parameter $s$ and $\overline{\beta }$
be the W-direction curve of $\beta .$ The tangent indicatrix curve of $%
\overline{\beta }$ \ is a general helix if and only if the binormal
indicatrix curve of $\overline{\beta }$ is general helix.
\end{corollary}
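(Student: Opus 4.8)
The plan is to read off the biconditional directly from the computations already made for the two indicatrix curves, using the characterization of a general helix in Theorem~4. First I would recall equations (3.5) and (3.10): for the tangent indicatrix $\alpha$ and the binormal indicatrix $\gamma$ of the W-direction curve $\overline{\beta}$ one has
\begin{equation*}
\frac{\tau _{\alpha }}{\kappa _{\alpha }}=-\frac{A}{\kappa ^{3}(1+f^{2})^{2}(1+\delta ^{2})^{3/2}}=\frac{\tau _{\gamma }}{\kappa _{\gamma }},
\end{equation*}
where $A=\kappa ^{\prime \prime }(f-h)+3\kappa \kappa ^{\prime }\sqrt{1+f^{2}}(1+fg)\delta$ and $f,g,h,\delta$ are as in Remark~2. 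Thus the harmonic curvatures (equivalently, the ratios $\tau/\kappa$) of $\alpha$ and $\gamma$ coincide as functions of $s$.

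Consequently $\tau_{\alpha}/\kappa_{\alpha}$ is a constant function if and only if $\tau_{\gamma}/\kappa_{\gamma}$ is a constant function. By Theorem~4 (a curve is a general helix if and only if $\tau/\kappa$ is constant), this says precisely that $\alpha$ is a general helix if and only if $\gamma$ is a general helix, which is the claim. Equivalently one could simply observe that the necessary and sufficient condition $A^{\prime }-A\bigl(\tfrac{3\kappa ^{\prime }}{\kappa }+\tfrac{4ff^{\prime }}{1+f^{2}}+\tfrac{3\delta \delta ^{\prime }}{1+\delta ^{2}}\bigr)=0$ derived in Theorem~6 for the tangent indicatrix is word-for-word the same as the condition obtained in Theorem~10 for the binormal indicatrix, so the two statements are literally equivalent.

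There is essentially no obstacle here beyond bookkeeping: the only point that needs a word of care is the sign conventions (Remark~2 and the assumption $\overline{\tau}>0$ made in the proof of Theorem~7), so I would note that the common value of $\tau_{\alpha}/\kappa_{\alpha}$ and $\tau_{\gamma}/\kappa_{\gamma}$ is unchanged under the admissible sign choices, which keeps the equivalence intact. One may also remark that Corollary~8 (the two indicatrix curves are Bertrand mates, since $N_{\alpha}=-N_{\gamma}$) gives an alternative, more geometric route, but the identity of the two ratios above is the most direct argument.
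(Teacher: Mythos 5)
Your argument is exactly the paper's: the proof there simply cites equations (3.5) and (3.10), whose right-hand sides coincide, so $\tau_{\alpha}/\kappa_{\alpha}=\tau_{\gamma}/\kappa_{\gamma}$ and constancy of one ratio is equivalent to constancy of the other. Your added remarks on signs and on the identical conditions in Theorems 6 and 10 are fine but not needed; the proposal matches the paper's proof.
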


\begin{proof}
By the equations (3.5) and (3.10), the result is clear.
\end{proof}

\begin{theorem}
The binormal indicatrix curve $\gamma $ of the W-direction curve is a slant
helix if and only if the following equation is satisfied;%
\begin{equation*}
3\delta \delta ^{\prime }\frac{X}{Y}+(1+\delta ^{2})\left( \frac{X}{Y}%
\right) ^{\prime }=0
\end{equation*}%
where $\delta =$\ $\frac{\overline{\kappa }}{\overline{\tau }}$ \ , \ $%
X=(1+\delta ^{2})\overline{\tau }(\delta ^{\prime \prime }\overline{\tau }%
-\delta ^{\prime }\overline{\tau }^{\prime })-3\delta (\delta ^{\prime })^{2}%
\overline{\tau }^{2}$ $\ $and $\ Y=\left( \overline{\tau }^{2}(1+\delta
^{2})^{3}+(\delta ^{\prime })^{2}\right) ^{3/2}.$
\end{theorem}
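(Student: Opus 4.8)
The plan is to mirror the argument used for the slant--helix characterization of the tangent indicatrix (the analogue of Theorem~6), feeding in the Frenet data of $\gamma$ from Theorem~7 together with the reduction already carried out in the proof of the binormal planarity theorem (Theorem~8). By the Izumiya--Takeuchi criterion recalled through (1.1), the curve $\gamma$ is a slant helix if and only if the geodesic curvature
\[
\delta_{\gamma}=\frac{\kappa_{\gamma}^{2}}{(\kappa_{\gamma}^{2}+\tau_{\gamma}^{2})^{3/2}}\left(\frac{\tau_{\gamma}}{\kappa_{\gamma}}\right)^{\prime}
\]
of the principal image of its principal normal indicatrix is a constant function, i.e.\ $\delta_{\gamma}^{\prime}=0$. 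So the whole task reduces to computing $\delta_{\gamma}$ in the variables $\delta=\overline{\kappa}/\overline{\tau}$ and $\overline{\tau}$, and then differentiating.

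First I would rewrite $\kappa_{\gamma}$ and $\tau_{\gamma}$ through $\overline{\kappa},\overline{\tau}$ by means of (3.8) and (3.9) rather than through $\kappa,\tau$. Using $\overline{\kappa}^{2}+\overline{\tau}^{2}=\overline{\tau}^{2}(1+\delta^{2})$ and the derivative identities already recorded in the proofs of the tangent case and of Theorem~8, one gets $\kappa_{\gamma}=\sqrt{1+\delta^{2}}$ and, exactly as in the proof of Theorem~8, $\tau_{\gamma}=\delta^{\prime}/\!\left(\overline{\tau}(1+\delta^{2})\right)$, hence $\tau_{\gamma}/\kappa_{\gamma}=\delta^{\prime}/\!\left(\overline{\tau}(1+\delta^{2})^{3/2}\right)$. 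A short computation then gives $\kappa_{\gamma}^{2}+\tau_{\gamma}^{2}=\left((1+\delta^{2})^{3}\overline{\tau}^{2}+(\delta^{\prime})^{2}\right)/\!\left(\overline{\tau}^{2}(1+\delta^{2})^{2}\right)$, so that $(\kappa_{\gamma}^{2}+\tau_{\gamma}^{2})^{3/2}=Y/\!\left(\overline{\tau}^{3}(1+\delta^{2})^{3}\right)$ with $Y$ as in the statement, while the quotient rule applied to $\tau_{\gamma}/\kappa_{\gamma}$ yields $\left(\tau_{\gamma}/\kappa_{\gamma}\right)^{\prime}=X/\!\left(\overline{\tau}^{3}(1+\delta^{2})^{5/2}\right)$ with $X$ as in the statement. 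Multiplying the three factors, all powers of $\overline{\tau}$ cancel and the powers of $1+\delta^{2}$ collapse, leaving the compact form
\[
\delta_{\gamma}=\frac{(1+\delta^{2})^{3/2}\,X}{Y},
\]
the binormal analogue of (3.7), differing from the tangent case only by the absence of the extra factor $\delta$. (As a cross--check: since $\kappa_{\alpha}=\kappa_{\gamma}/\delta$ and $\tau_{\alpha}=\tau_{\gamma}/\delta$ one has $\delta_{\alpha}=\delta\,\delta_{\gamma}$, consistent with (3.7).)

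Finally I would differentiate: from $\delta_{\gamma}=(1+\delta^{2})^{3/2}X/Y$ the product rule gives
\[
\delta_{\gamma}^{\prime}=(1+\delta^{2})^{1/2}\left(3\delta\delta^{\prime}\frac{X}{Y}+(1+\delta^{2})\left(\frac{X}{Y}\right)^{\prime}\right),
\]
so $\delta_{\gamma}^{\prime}=0$ if and only if the displayed equation in the statement holds; invoking (1.1) once more identifies this with $\gamma$ being a slant helix, which finishes the proof. The main obstacle is the bookkeeping in the middle step --- re-deriving $\tau_{\gamma}$ in the $(\delta,\overline{\tau})$ variables and pushing the quotient-rule differentiation of $\tau_{\gamma}/\kappa_{\gamma}$ through so that precisely the combination $X$ emerges in the numerator. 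That computation is routine but sign- and factor-sensitive, and it is the step where the exact definitions of $X$ and $Y$ must be matched.
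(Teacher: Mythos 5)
Your proposal is correct and follows essentially the same route as the paper: apply the Izumiya--Takeuchi criterion (1.1) to $\gamma$, rewrite $\kappa_{\gamma},\tau_{\gamma}$ from (3.8)--(3.9) in the variables $\delta=\overline{\kappa}/\overline{\tau}$ and $\overline{\tau}$, arrive at $\delta_{\gamma}=(1+\delta^{2})^{3/2}X/Y$ (the paper's (3.12)), and differentiate to get $\delta_{\gamma}^{\prime}=(1+\delta^{2})^{1/2}\bigl(3\delta\delta^{\prime}\tfrac{X}{Y}+(1+\delta^{2})(\tfrac{X}{Y})^{\prime}\bigr)$. Your intermediate computations (including $\tau_{\gamma}=\delta^{\prime}/(\overline{\tau}(1+\delta^{2}))$ and the cross-check $\delta_{\alpha}=\delta\,\delta_{\gamma}$) agree with the paper's.
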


\begin{proof}
From the equation (1.1), the geodesic curvature of the principal image of
the principal normal indicatrix of the binormal indicatrix curve $\gamma $
is given by%
\begin{equation}
\delta _{\gamma }=\frac{\kappa _{\gamma }^{2}}{(\kappa _{\gamma }^{2}+\tau
_{\gamma }^{2})^{\frac{3}{2}}}.\left( \frac{\tau _{\gamma }}{\kappa _{\gamma
}}\right) ^{\prime }.  \tag{3.11}
\end{equation}%
We take into account that \ $\overline{\tau }^{\prime }\overline{\kappa }-%
\overline{\tau }\overline{\kappa }^{\prime }=-\overline{\tau }^{2}\delta
^{\prime }$ \ , \ \ $\overline{\kappa }^{\prime }\frac{\overline{\tau }^{2}}{%
\overline{\kappa }}-\overline{\tau }\overline{\tau }^{\prime }=\overline{%
\tau }^{2}\frac{\delta ^{\prime }}{\delta }$ \ and \ $\overline{\kappa }^{2}+%
\overline{\tau }^{2}=\overline{\tau }^{2}(1+\delta ^{2})$\ , then put the
equations (3.8) and (3.9) in (3.11), we clearly find%
\begin{equation*}
\delta _{\gamma }=\frac{\overline{\tau }^{3}(1+\delta ^{2})^{4}}{\left(
(1+\delta ^{2})^{3}\overline{\tau }^{2}+(\delta ^{\prime })^{2}\right) ^{3/2}%
}\left( \frac{\delta ^{\prime }}{\overline{\tau }(1+\delta ^{2})^{3/2}}%
\right) ^{\prime }.
\end{equation*}%
After some calculations we have%
\begin{equation}
\delta _{\gamma }=\frac{(1+\delta ^{2})^{3/2}.X}{Y}  \tag{3.12}
\end{equation}%
where $X=(1+\delta ^{2})\overline{\tau }(\delta ^{\prime \prime }\overline{%
\tau }-\delta ^{\prime }\overline{\tau }^{\prime })-3\delta (\delta ^{\prime
})^{2}\overline{\tau }^{2}$ \ \ and $\ Y=\left( \overline{\tau }%
^{2}(1+\delta ^{2})^{3}+(\delta ^{\prime })^{2}\right) ^{3/2}.$

By differentiating the equation (3.12), we obtain%
\begin{equation*}
\left( \delta _{\gamma }\right) ^{\prime }=(1+\delta ^{2})^{1/2}\left(
3\delta \delta ^{\prime }\frac{X}{Y}+(1+\delta ^{2})\left( \frac{X}{Y}%
\right) ^{\prime }\right) .
\end{equation*}%
$\left( \delta _{\gamma }\right) ^{\prime }=0$ \ if and only if $3\delta
\delta ^{\prime }\frac{X}{Y}+(1+\delta ^{2})\left( \frac{X}{Y}\right)
^{\prime }=0.$

So by taking into consideration the equation (1.1) , the proof is completed.
\end{proof}

\begin{corollary}
If any unit speed curve $\beta $ is slant helix, then the tangent indicatrix
curve of W-direction curve of $\beta $ is slant helix if and only if the
binormal indicatrix curve of W-direction curve of $\beta $ is slant helix.
\end{corollary}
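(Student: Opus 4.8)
The plan is to note that the hypothesis ``$\beta$ is a slant helix'' forces $\delta = \overline{\kappa}/\overline{\tau}$ to be constant, and that this single fact already makes both the tangent-indicatrix and the binormal-indicatrix slant-helix criteria (Theorem 6 and Theorem 10) hold identically, so the asserted equivalence is trivially true because \emph{both} of its sides are true. First I would invoke the Izumiya--Takeuchi criterion quoted in the Introduction (equation (1.1)): $\beta$ is a slant helix if and only if $\delta$ is a constant function; and, as recorded in the Remark following Theorem 4 and used in the proof of Theorem 4, $\delta$ here denotes precisely $\overline{\kappa}/\overline{\tau}$. Hence the hypothesis gives $\delta^{\prime}\equiv 0$, and consequently $\delta^{\prime\prime}\equiv 0$.

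Next I would substitute $\delta^{\prime}=\delta^{\prime\prime}=0$ into the auxiliary quantity $X=(1+\delta^{2})\overline{\tau}(\delta^{\prime\prime}\overline{\tau}-\delta^{\prime}\overline{\tau}^{\prime})-3\delta(\delta^{\prime})^{2}\overline{\tau}^{2}$ occurring in Theorems 6 and 10, which immediately yields $X\equiv 0$, hence $X/Y\equiv 0$ and $(X/Y)^{\prime}\equiv 0$. By equation (3.7) this forces $\delta_{\alpha}\equiv 0$, and by equation (3.12) it forces $\delta_{\gamma}\equiv 0$; both are therefore constant functions, so by Theorem 6 the tangent indicatrix $\alpha$ of the W-direction curve is a slant helix and by Theorem 10 the binormal indicatrix $\gamma$ is a slant helix. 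Since under the hypothesis both curves are slant helices, the biconditional ``$\alpha$ is a slant helix if and only if $\gamma$ is a slant helix'' holds.

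An equivalent and more geometric route uses Theorem 4 and Theorem 8 directly: when $\beta$ is a slant helix these give $\tau_{\alpha}=0$ and $\tau_{\gamma}=0$, i.e.\ $\alpha$ and $\gamma$ are planar. A planar curve has $\tau=0$, so its function $\delta$ in (1.1) is identically $0$, hence constant; equivalently, its principal normal stays orthogonal to the fixed unit normal of its plane. Thus $\alpha$ and $\gamma$ are slant helices, and the corollary follows once more. One may note that Corollary 2 in fact gives a somewhat stronger statement: since $N_{\alpha}=-N_{\gamma}$ at corresponding points, $\alpha$ and $\gamma$ share parallel principal normal directions, so one is a slant helix exactly when the other is --- even without the slant-helix hypothesis on $\beta$.

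I do not anticipate a genuine obstacle. The only point that needs care is that a planar curve must be counted as a slant helix under the Izumiya--Takeuchi $\delta$-criterion adopted throughout the paper --- which is why I would exhibit $\delta_{\alpha}=\delta_{\gamma}=0$ explicitly rather than merely invoking planarity --- and one should keep in mind the standing nondegeneracy conventions of Theorems 3 and 7 under which $\alpha$, $\gamma$ and their Frenet apparatus are defined, so that the statement is not vacuous.
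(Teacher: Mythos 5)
Your argument is correct, but its logical structure differs from the paper's. The paper's own proof is essentially one line: combining (3.7) and (3.12) gives the proportionality $\delta_{\alpha}=\delta\,\delta_{\gamma}$ with $\delta=\overline{\kappa}/\overline{\tau}$, and since the slant-helix hypothesis on $\beta$ makes $\delta$ constant, constancy of $\delta_{\alpha}$ is equivalent to constancy of $\delta_{\gamma}$ (tacitly using that $\delta$ is a nonzero constant, i.e.\ that $\beta$ is not a general helix, which is the standing nondegeneracy assumption for the W-direction curve). You instead substitute $\delta^{\prime}=\delta^{\prime\prime}=0$ into $X$, obtain $X\equiv 0$, and hence $\delta_{\alpha}\equiv\delta_{\gamma}\equiv 0$, so both indicatrices are (degenerate) slant helices and the biconditional holds because both of its sides are true; this is consistent with Theorems 4 and 8 (both indicatrices are planar) and in fact exposes that the corollary is trivially true under its hypothesis, a point the paper's proportionality argument leaves implicit. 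Your route buys a little more: it avoids any division-by-$\delta$ concern and explains why the statement cannot fail, and your care in exhibiting $\delta_{\alpha}=\delta_{\gamma}=0$ explicitly (rather than merely invoking planarity) is the right way to handle the degenerate case of the Izumiya--Takeuchi criterion. The paper's identity $\delta_{\alpha}=\delta\,\delta_{\gamma}$ is, on the other hand, the more informative relation, since it yields the equivalence whenever $\delta$ is a nonzero constant without evaluating either side. Your closing remark via Corollary 2 (from $N_{\alpha}=-N_{\gamma}$ the two indicatrices have parallel principal normals, so one is a slant helix exactly when the other is, with no hypothesis on $\beta$ at all) is a genuine strengthening not stated in the paper.
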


\begin{proof}
By the equations (3.7) and (3.12), we have%
\begin{equation*}
\delta _{\alpha }=\delta .\delta _{\gamma }
\end{equation*}%
where $\delta =\frac{\overline{\kappa }}{\overline{\tau }}.$

If any unit speed curve $\beta $ is slant helix,then $\delta $ is constant
by Theorem 1. If $\delta $ is constant, the result is apparent.
\end{proof}

\textbf{Example : }Let a curve which is a slant helix be 
\begin{equation*}
\beta (s)=\left( -\frac{3}{2}\cos \left( \frac{s}{2}\right) -\frac{1}{6}\cos
\left( \frac{3s}{2}\right) ,-\frac{3}{2}\sin \left( \frac{s}{2}\right) -%
\frac{1}{6}\sin \left( \frac{3s}{2}\right) ,\sqrt{3}\cos \left( \frac{s}{2}%
\right) \right) .
\end{equation*}%
The tangent, binormal vectors, the curvature, the torsion and the Darboux
vector were found in \cite{Mac}.%
\begin{eqnarray*}
T(s) &=&\left( \frac{3}{4}\sin \left( \frac{s}{2}\right) +\frac{1}{4}\sin
\left( \frac{3s}{2}\right) ,-\frac{3}{4}\cos \left( \frac{s}{2}\right) -%
\frac{1}{4}\cos \left( \frac{3s}{2}\right) ,-\frac{\sqrt{3}}{2}\sin \left( 
\frac{s}{2}\right) \right)  \\
B(s) &=&\left( -\frac{1}{2}\cos \left( \frac{s}{2}\right) \left( 2\cos
^{2}\left( \frac{s}{2}\right) -3\right) ,\sin ^{3}\left( \frac{s}{2}\right) ,%
\frac{\sqrt{3}}{2}\cos \left( \frac{s}{2}\right) \right) 
\end{eqnarray*}%
and%
\begin{equation*}
\kappa (s)=\frac{\sqrt{3}}{2}\cos \left( \frac{s}{2}\right) \text{ , \ \ \ \
\ }\tau (s)=-\frac{\sqrt{3}}{2}\sin \left( \frac{s}{2}\right) .
\end{equation*}%
Also the W-direction curve of $\beta $ was given as;%
\begin{equation*}
\overline{\beta }(s)=\left( -\frac{9s}{8}-6\sin \left( \frac{s}{2}\right) -%
\frac{3}{4}\sin \left( s\right) -\frac{1}{16}\sin (2s),-\frac{1}{2}\cos (s),%
\frac{\sqrt{3}s}{2}\right) +(c_{1},c_{2},c_{3})
\end{equation*}%
where $c_{1},c_{2},c_{3}$ are constants.

Now lets find the tangent and binormal indicatrix curves of this W-direction
curve $\overline{\beta }.$ By using these expressions above and the
equations (2.3), (3.1) and (3.2), the tangent indicatrix and binormal
indicatrix are obtained respectively;%
\begin{eqnarray*}
\alpha (s_{\alpha }) &=&(-\frac{3}{4}\sin ^{2}\left( \frac{s}{2}\right) -%
\frac{1}{4}\sin \left( \frac{s}{2}\right) \sin \left( \frac{3s}{2}\right) -%
\frac{1}{2}\cos ^{2}\left( \frac{s}{2}\right) \left( 2\cos ^{2}\left( \frac{s%
}{2}\right) -3\right)  \\
&&,\text{ }\sin \left( \frac{s}{2}\right) \left[ \frac{3}{4}\cos \left( 
\frac{s}{2}\right) +\frac{1}{4}\cos \left( \frac{3s}{2}\right) +\cos \left( 
\frac{s}{2}\right) \sin ^{2}\left( \frac{s}{2}\right) \right] ,\text{ \ }%
\frac{\sqrt{3}}{2})
\end{eqnarray*}%
and%
\begin{eqnarray*}
\gamma (s_{\gamma }) &=&(\frac{\sqrt{3}}{2}\left[ \frac{3}{4}\cos ^{2}\left( 
\frac{s}{2}\right) +\frac{1}{4}\cos \left( \frac{3s}{2}\right) \cos \left( 
\frac{s}{2}\right) -\sin ^{4}\left( \frac{s}{2}\right) \right]  \\
&&,\text{ \ }\frac{\sqrt{3}}{2}\cos \left( \frac{s}{2}\right) \left[ \frac{3%
}{4}\sin \left( \frac{s}{2}\right) +\frac{1}{4}\sin \left( \frac{3s}{2}%
\right) -\frac{1}{2}\sin \left( \frac{s}{2}\right) \left( 2\cos ^{2}\left( 
\frac{s}{2}\right) -3\right) \right]  \\
&&,\text{ \ }\frac{3}{4}\cos \left( s\right) -\frac{1}{4}\left[ \sin
^{3}\left( \frac{s}{2}\right) \sin \left( \frac{3s}{2}\right) -\cos
^{3}\left( \frac{s}{2}\right) \cos \left( \frac{3s}{2}\right) \right]  \\
&&+\frac{3}{8}\cos \left( \frac{s}{2}\right) \left[ 3\cos \left( \frac{s}{2}%
\right) +\cos \left( \frac{3s}{2}\right) \right] ).
\end{eqnarray*}

\section{\textbf{Some} \textbf{Ruled Sufaces Related To W-direction Curves}}

In this section, we will identify some special ruled surfaces which are
formed by using the base curve as the W-direction curve.

Let $\beta $ be a curve with the arc-length parameter $s$ and $\overline{%
\beta }$ \ be the W-direction curve of $\beta .$ From the equations (2.4)
and (2.5), the \textit{normal }and \textit{binormal} \textit{surfaces} of $%
\overline{\beta }$ \ are given by;%
\begin{eqnarray}
\Phi _{1}(s,v) &=&\overline{\beta }(s)+v\overline{N}(s)  \TCItag{4.1} \\
\Phi _{2}(s,v) &=&\overline{\beta }(s)+v\overline{B}(s).  \TCItag{4.2}
\end{eqnarray}

\begin{theorem}
The normal surface and binormal surface of the W-direction curve are not
developable.
\end{theorem}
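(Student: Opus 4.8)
The plan is to compute the distribution parameter $\lambda$ of each of the two ruled surfaces $\Phi_1$ and $\Phi_2$ and show it is not identically zero; by the Lamarle formula recalled in the preliminaries, a ruled surface is developable precisely when $\lambda \equiv 0$, so a nonzero distribution parameter forces non-developability. For this I will use the base curve $\overline{\beta}$ with $\overline{\beta}{}'=\overline{T}$ and arc-length parameter $s$, the director curves $\overline{N}$ and $\overline{B}$ respectively, and the Frenet relations $\overline{T}{}'=\overline{\kappa}\,\overline{N}$, $\overline{N}{}'=-\overline{\kappa}\,\overline{T}+\overline{\tau}\,\overline{B}$, $\overline{B}{}'=-\overline{\tau}\,\overline{N}$ together with the data in (2.3), in particular $\overline{\tau}=\sqrt{\kappa^2+\tau^2}\neq 0$ since $\beta$ is a Frenet curve.

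For the normal surface $\Phi_1(s,v)=\overline{\beta}(s)+v\overline{N}(s)$, formula (2.6) gives
\[
\lambda_1=\frac{\det\!\left(\overline{T},\,\overline{N},\,\overline{N}{}'\right)}{\left\Vert \overline{N}{}'\right\Vert^2}
=\frac{\det\!\left(\overline{T},\,\overline{N},\,-\overline{\kappa}\,\overline{T}+\overline{\tau}\,\overline{B}\right)}{\overline{\kappa}^2+\overline{\tau}^2}
=\frac{\overline{\tau}\,\det(\overline{T},\overline{N},\overline{B})}{\overline{\kappa}^2+\overline{\tau}^2}
=\frac{\overline{\tau}}{\overline{\kappa}^2+\overline{\tau}^2},
\]
which is nowhere zero because $\overline{\tau}=\sqrt{\kappa^2+\tau^2}>0$. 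Hence $\Phi_1$ is not developable. For the binormal surface $\Phi_2(s,v)=\overline{\beta}(s)+v\overline{B}(s)$, the same computation with director $\overline{B}$ and $\overline{B}{}'=-\overline{\tau}\,\overline{N}$ yields
\[
\lambda_2=\frac{\det\!\left(\overline{T},\,\overline{B},\,-\overline{\tau}\,\overline{N}\right)}{\overline{\tau}^2\left\Vert \overline{N}\right\Vert^2}
=\frac{-\overline{\tau}\,\det(\overline{T},\overline{B},\overline{N})}{\overline{\tau}^2}
=\frac{\det(\overline{T},\overline{N},\overline{B})}{\overline{\tau}}
=\frac{1}{\overline{\tau}}=\frac{1}{\sqrt{\kappa^2+\tau^2}},
\]
again nowhere zero, so $\Phi_2$ is not developable either.

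There is no real obstacle here: the argument is a direct determinant computation using only the Frenet equations for $\overline{\beta}$ and the fact that $\overline{\tau}$ never vanishes for a Frenet curve. The only point requiring a word of care is that formula (2.6) presumes the director curve is differentiated with respect to the base curve's arc-length, which is legitimate since $\overline{s}=s$ was fixed earlier; and one should note $\Vert\overline{N}\Vert=1$ and $\det(\overline{T},\overline{N},\overline{B})=1$ because $\{\overline{T},\overline{N},\overline{B}\}$ is a positively oriented orthonormal frame. Alternatively, if one prefers to avoid invoking the Lamarle formula one can instead compute the Gaussian curvature $K=(eg-f^2)/(EG-F^2)$ directly from (2.8) and observe $eg-f^2=-f^2\not\equiv 0$ in each case, but the distribution-parameter route is the shortest and is the one I would present.
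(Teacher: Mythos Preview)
Your proof is correct and follows the same strategy as the paper: compute the distribution parameters of $\Phi_1$ and $\Phi_2$ via formula~(2.6) and observe they cannot vanish because $\overline{\tau}=\sqrt{\kappa^2+\tau^2}>0$. The only difference is cosmetic: the paper expands $\overline{N}$, $\overline{B}$ and their derivatives in the frame $\{T,N,B\}$ of the original curve $\beta$ before evaluating the determinants, arriving at $\lambda_{\Phi_1}=\dfrac{(\kappa^2+\tau^2)^{5/2}}{(\tau\kappa'-\tau'\kappa)^2+(\kappa^2+\tau^2)^3}$ and $\lambda_{\Phi_2}=\dfrac{1}{\sqrt{\kappa^2+\tau^2}}$, whereas you work directly in the Frenet frame $\{\overline{T},\overline{N},\overline{B}\}$ of $\overline{\beta}$ and obtain the equivalent expressions $\lambda_1=\overline{\tau}/(\overline{\kappa}^2+\overline{\tau}^2)$ and $\lambda_2=1/\overline{\tau}$; substituting (2.3) shows these coincide.
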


\begin{proof}
By using equation (2.6), the distribution parameters of the normal and the
binormal surfaces of W-direction curve given in (4.1) and (4.2) are;%
\begin{eqnarray*}
\lambda _{\Phi _{1}} &=&\frac{\det (\frac{d\overline{\beta }}{ds},\overline{N%
},\frac{d\overline{N}}{ds})}{\left\Vert \frac{d\overline{N}}{ds}\right\Vert
^{2}} \\
\lambda _{\Phi _{2}} &=&\frac{\det (\frac{d\overline{\beta }}{ds},\overline{B%
},\frac{d\overline{B}}{ds})}{\left\Vert \frac{d\overline{B}}{ds}\right\Vert
^{2}}.
\end{eqnarray*}%
Taking into account that the equations given in (2.3) and%
\begin{eqnarray*}
\frac{d\overline{N}}{ds} &=&-\left( \frac{\kappa }{\sqrt{\kappa ^{2}+\tau
^{2}}}\right) ^{\prime }T-\sqrt{\kappa ^{2}+\tau ^{2}}N+\left( \frac{\tau }{%
\sqrt{\kappa ^{2}+\tau ^{2}}}\right) ^{\prime }B \\
\frac{d\overline{B}}{ds} &=&\kappa T-\tau B
\end{eqnarray*}%
we have;%
\begin{eqnarray*}
\lambda _{\Phi _{1}} &=&\frac{\left( \kappa ^{2}+\tau ^{2}\right) ^{5/2}}{%
\left( \tau \kappa ^{\prime }-\tau ^{\prime }\kappa \right) ^{2}+\left(
\kappa ^{2}+\tau ^{2}\right) ^{3}} \\
\lambda _{\Phi _{2}} &=&\frac{1}{\sqrt{\kappa ^{2}+\tau ^{2}}}.
\end{eqnarray*}%
Since the distribution parameters cannot be zero, the normal and binormal
surfaces of W-direction curve are not developable.
\end{proof}

\begin{theorem}
If any curve $\beta $ with arc-length parameter $s$ is general helix and the
equation \ $\kappa \kappa ^{\prime }+\tau \tau ^{\prime }=0$ \ is satisfied
then the normal surface and the binormal surface of W-direction curve $%
\overline{\beta }$ \ of \ $\beta $ \ are minimal.
\end{theorem}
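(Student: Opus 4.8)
The plan is to compute the first and second fundamental quantities $E,F,G,e,f,g$ of the two surfaces directly, feed them into the mean curvature formula (2.9), and show that the numerator vanishes under the stated hypotheses. First I would record the Frenet equations of $\overline{\beta}$, which follow from (2.3): $\overline{T}'=\overline{\kappa}\,\overline{N}$, $\overline{N}'=-\overline{\kappa}\,\overline{T}+\overline{\tau}\,\overline{B}$, $\overline{B}'=-\overline{\tau}\,\overline{N}$ (with $\overline{s}=s$, as noted in Section 3). For the normal surface $\Phi_1(s,v)=\overline{\beta}(s)+v\overline{N}(s)$ this gives $\Phi_{1,s}=(1-v\overline{\kappa})\overline{T}+v\overline{\tau}\,\overline{B}$ and $\Phi_{1,v}=\overline{N}$, hence $F=0$, $G=1$, $E=(1-v\overline{\kappa})^2+v^2\overline{\tau}^2$, and $U=E^{-1/2}\bigl(-v\overline{\tau}\,\overline{T}+(1-v\overline{\kappa})\overline{B}\bigr)$ from (2.7). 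Differentiating once more and projecting onto $U$ yields $g=0$, $f=\overline{\tau}/\sqrt{E}$ and $e=\bigl(v^2\overline{\kappa}'\overline{\tau}+v\overline{\tau}'(1-v\overline{\kappa})\bigr)/\sqrt{E}$; since $F=0$ and $G=1$, formula (2.9) collapses to $H_{\Phi_1}=e/(2E)$. The parallel computation for the binormal surface $\Phi_2(s,v)=\overline{\beta}(s)+v\overline{B}(s)$ gives $\Phi_{2,s}=\overline{T}-v\overline{\tau}\,\overline{N}$, $\Phi_{2,v}=\overline{B}$, $F=0$, $G=1$, $E=1+v^2\overline{\tau}^2$, $U=E^{-1/2}(-v\overline{\tau}\,\overline{T}-\overline{N})$, and after one more differentiation $g=0$, $f=\overline{\tau}/\sqrt{E}$, $e=\bigl(-\overline{\kappa}(1+v^2\overline{\tau}^2)+v\overline{\tau}'\bigr)/\sqrt{E}$, so again $H_{\Phi_2}=e/(2E)$.

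Next I would determine when these mean curvatures vanish identically in $v$. For $\Phi_1$, $H_{\Phi_1}\equiv0$ forces the polynomial $v\overline{\tau}'+v^2(\overline{\kappa}'\overline{\tau}-\overline{\tau}'\overline{\kappa})$ to be zero for all $v$, hence $\overline{\tau}'=0$ and then $\overline{\kappa}'\overline{\tau}=0$; since $\overline{\tau}=\sqrt{\kappa^2+\tau^2}\neq0$ this means both $\overline{\kappa}$ and $\overline{\tau}$ are constant. For $\Phi_2$, $H_{\Phi_2}\equiv0$ forces $-\overline{\kappa}+v\overline{\tau}'-v^2\overline{\kappa}\,\overline{\tau}^2$ to vanish for all $v$, i.e. $\overline{\kappa}=0$ and $\overline{\tau}'=0$. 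So it suffices to show that, under the hypotheses, $\overline{\kappa}=0$ (in particular $\overline{\kappa}'=0$) and $\overline{\tau}'=0$.

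Finally I would use the two hypotheses. Since $\beta$ is a general helix, the ratio $\tau/\kappa$ is constant (Theorem 2), say $\tau=c\kappa$; then $\tau'=c\kappa'$, so $\tau\kappa'-\tau'\kappa=c\kappa\kappa'-c\kappa'\kappa=0$, and by (2.3) this gives $\overline{\kappa}=|\tau\kappa'-\tau'\kappa|/(\kappa^2+\tau^2)=0$, whence also $\overline{\kappa}'=0$. The extra assumption $\kappa\kappa'+\tau\tau'=0$ is exactly $(\kappa^2+\tau^2)'=0$, i.e. $\overline{\tau}=\sqrt{\kappa^2+\tau^2}$ is constant, so $\overline{\tau}'=0$. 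Substituting $\overline{\kappa}=\overline{\kappa}'=0$ and $\overline{\tau}'=0$ into the expressions for $e$ above makes $e\equiv0$ in both cases, hence $H_{\Phi_1}=H_{\Phi_2}=0$, so the normal and binormal surfaces are minimal.

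A technical point worth flagging: when $\overline{\kappa}=0$ the curve $\overline{\beta}$ is a straight line and its Frenet frame is not canonically defined, but the orthonormal triple $\{\overline{T},\overline{N},\overline{B}\}$ furnished by (2.3) is still well defined along $\overline{\beta}$ and satisfies the derivative relations used above, so all the computations remain valid (in this situation $\Phi_1,\Phi_2$ are in fact pieces of a helicoid, consistent with minimality). The only mildly delicate step of the argument is the correct evaluation of $e=\langle\Phi_{ss},U\rangle$, where the $\overline{\kappa}'$ and $\overline{\tau}'$ terms enter and must be tracked carefully; the rest is routine bookkeeping with the Frenet formulas for $\overline{\beta}$ and the substitutions from (2.3).
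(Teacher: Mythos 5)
Your computation is correct and reaches the paper's conclusion, but you organize it differently from the paper. You work intrinsically in the Frenet apparatus $\{\overline{T},\overline{N},\overline{B},\overline{\kappa},\overline{\tau}\}$ of the W-direction curve, obtain the textbook data of a normal/binormal surface of an arbitrary Frenet curve ($F=0$, $G=1$, $g=0$, hence $H=e/(2E)$, with $e_{1}=\bigl(v^{2}\overline{\kappa}'\overline{\tau}+v\overline{\tau}'(1-v\overline{\kappa})\bigr)/\sqrt{E_{1}}$ and $e_{2}=\bigl(-\overline{\kappa}(1+v^{2}\overline{\tau}^{2})+v\overline{\tau}'\bigr)/\sqrt{E_{2}}$, which indeed agree with the paper's $e_{1},e_{2}$ after substituting (2.3)), and only at the end translate the hypotheses via (2.3) into $\overline{\kappa}=0$ and $\overline{\tau}'=0$. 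The paper instead expands everything in the frame $\{T,N,B\}$ of the original curve with the auxiliary functions $X,Y,A,C,D$, and verifies directly that $e_{1}=e_{2}=0$ when $(\tau/\kappa)'=0$ and $\kappa\kappa'+\tau\tau'=0$; that bookkeeping is heavier but is reused verbatim in the subsequent theorems on geodesic/normal curvature and geodesic torsion. Your route buys transparency: it shows the mechanism ($\beta$ a general helix $\Leftrightarrow\overline{\kappa}=0$; $\kappa\kappa'+\tau\tau'=0\Leftrightarrow\overline{\tau}$ constant), and as a by-product you get the converse characterization of when $H\equiv 0$ in $v$, which the paper does not state. Your flag about the degenerate case is also apposite: under the general-helix hypothesis $\overline{\beta}$ is a straight line, so the classical Frenet frame of $\overline{\beta}$ is not defined, but the frame supplied by (2.3) (with the sign convention of Remark~1) still satisfies $\overline{T}'=\overline{\kappa}\,\overline{N}$, $\overline{N}'=-\overline{\kappa}\,\overline{T}+\overline{\tau}\,\overline{B}$, $\overline{B}'=-\overline{\tau}\,\overline{N}$, which is exactly what both your computation and the paper's implicitly use; making that observation explicit is a small improvement over the paper.
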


\begin{proof}
Lets find the mean curvatures of the normal and binormal surfaces in (4.1)
and (4.2) for minimallity.

For the normal surfaces given in (4.1), by taking into consideration the
equations (2.3), the following equations are obtained as;%
\begin{eqnarray*}
E_{1} &=&\left\langle (\Phi _{1})_{s},(\Phi _{1})_{s}\right\rangle
=A^{2}+C^{2}+D^{2} \\
F_{1} &=&\left\langle (\Phi _{1})_{s},(\Phi _{1})_{v}\right\rangle =-AX+DY=0
\\
G_{1} &=&\left\langle (\Phi _{1})_{v},(\Phi _{1})_{v}\right\rangle =1 \\
e_{1} &=&\left\langle (\Phi _{1})_{ss},U_{1}\right\rangle =\frac{1}{Z_{1}}%
\left( 
\begin{array}{c}
-(A^{\prime }+C\kappa )CY-(AY+DX)(A\kappa -C^{\prime }-D\tau ) \\ 
-(D^{\prime }-C\tau )CX%
\end{array}%
\right) \\
\ f_{1} &=&\left\langle (\Phi _{1})_{sv},U_{1}\right\rangle =\frac{1}{Z_{1}}%
\left( C(X^{\prime }Y-Y^{\prime }X)+\sqrt{\kappa ^{2}+\tau ^{2}}%
(AY+DX)\right) \\
g_{1} &=&\left\langle (\Phi _{1})_{vv},U_{1}\right\rangle =0
\end{eqnarray*}%
where $X=\frac{\kappa }{\sqrt{\kappa ^{2}+\tau ^{2}}},$ \ $Y=\frac{\tau }{%
\sqrt{\kappa ^{2}+\tau ^{2}}},$ \ $Z_{1}=\left\Vert (\Phi _{1})_{s}\times
(\Phi _{1})_{v}\right\Vert ,$ $\ U_{1}=\frac{1}{Z_{1}}((-CY)\mathbf{T}%
-(AY+DX)\mathbf{N}-(CX)\mathbf{B})$, $\ A=Y-vX^{\prime },$ \ $C=v\sqrt{%
\kappa ^{2}+\tau ^{2}}$ \ and \ $D=X+vY^{\prime }.$

For the binormal surfaces given in (4.2);%
\begin{eqnarray*}
E_{2} &=&\left\langle (\Phi _{2})_{s},(\Phi _{2})_{s}\right\rangle
=1+v^{2}(\kappa ^{2}+\tau ^{2}) \\
F_{2} &=&\left\langle (\Phi _{2})_{s},(\Phi _{2})_{v}\right\rangle =0 \\
G_{2} &=&\left\langle (\Phi _{2})_{v},(\Phi _{2})_{v}\right\rangle =1 \\
e_{2} &=&\left\langle (\Phi _{2})_{ss},U_{2}\right\rangle =\frac{1}{Z_{2}}%
\left( (\frac{\tau }{\kappa })^{\prime }\frac{1}{1+(\frac{\tau }{\kappa }%
)^{2}}+v.\frac{\kappa \kappa ^{\prime }+\tau \tau ^{\prime }}{\sqrt{\kappa
^{2}+\tau ^{2}}}+v^{2}\kappa ^{2}(\frac{\tau }{\kappa })^{\prime }\right)  \\
\ f_{2} &=&\left\langle (\Phi _{2})_{sv},U_{2}\right\rangle =\frac{\sqrt{%
\kappa ^{2}+\tau ^{2}}}{Z_{2}} \\
g_{2} &=&\left\langle (\Phi _{2})_{vv},U_{2}\right\rangle =0
\end{eqnarray*}%
where \ $Z_{2}=\sqrt{1+v^{2}(\kappa ^{2}+\tau ^{2})}$ \ and \ $U_{2}=\frac{1%
}{Z_{2}}\left( (X-v\tau )\mathbf{T}-(Y+v\kappa )\mathbf{B}\right) .$

By using the equation (2.9), we find the mean curvatures as;%
\begin{eqnarray*}
H_{1} &=&\frac{e_{1}}{2E_{1}} \\
H_{2} &=&\frac{e_{2}}{2E_{2}}.
\end{eqnarray*}%
$e_{2}$ is apparent from the above equation, lets find $e_{1}.$We can
calculate simply that%
\begin{eqnarray*}
AY+DX &=&1+v(\frac{\tau }{\kappa })^{\prime }\frac{1}{1+(\frac{\tau }{\kappa 
})^{2}} \\
A\kappa -C^{\prime }-D\tau &=&-v.\frac{\kappa \kappa ^{\prime }+\tau \tau
^{\prime }}{\sqrt{\kappa ^{2}+\tau ^{2}}} \\
-A^{\prime }CY-C^{2}\kappa Y-D^{\prime }CX+C^{2}\tau X &=&v^{2}.\frac{2(\tau
^{\prime }\kappa -\tau \kappa ^{\prime })(\kappa \kappa ^{\prime }+\tau \tau
^{\prime })-((\tau ^{\prime }\kappa )^{\prime }-(\tau \kappa ^{\prime
})^{\prime })(\kappa ^{2}+\tau ^{2})}{(\kappa ^{2}+\tau ^{2})^{3/2}}.
\end{eqnarray*}%
If we put these expressions in the equation of $e_{1}$, we have finally;%
\begin{equation*}
e_{1}=\frac{1}{Z_{1}}\left( 
\begin{array}{c}
v^{2}.\frac{2(\tau ^{\prime }\kappa -\tau \kappa ^{\prime })(\kappa \kappa
^{\prime }+\tau \tau ^{\prime })-((\tau ^{\prime }\kappa )^{\prime }-(\tau
\kappa ^{\prime })^{\prime })(\kappa ^{2}+\tau ^{2})}{(\kappa ^{2}+\tau
^{2})^{3/2}} \\ 
+v\left( 1+v(\frac{\tau }{\kappa })^{\prime }\frac{1}{1+(\frac{\tau }{\kappa 
})^{2}}\right) .\frac{\kappa \kappa ^{\prime }+\tau \tau ^{\prime }}{\sqrt{%
\kappa ^{2}+\tau ^{2}}}%
\end{array}%
\right) .
\end{equation*}

Since the curve $\beta $ is general helix, then $(\frac{\tau }{\kappa }%
)^{\prime }=0$ and so we have $(\tau ^{\prime }\kappa )^{\prime }-(\tau
\kappa ^{\prime })^{\prime }=0.$ Also by the condition $\kappa \kappa
^{\prime }+\tau \tau ^{\prime }=0$ given in theorem, we obtain $e_{1}=0$ \
and \ $e_{2}=0$.

So the proof is completed.
\end{proof}

\begin{theorem}
Let $\beta $ be any curve with arc-length parameter $s.$ If $\beta $ is
general helix, then the W-direction curve $\overline{\beta }$ which is on
the normal surface of the W-direction curve is a geodesic curve. Also the
W-direction curve $\overline{\beta }$ which is on the binormal surface is
geodesic curve.
\end{theorem}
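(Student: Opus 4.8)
The plan is to use the criterion recalled in the preliminaries that a curve lying on a surface is a geodesic precisely when its geodesic curvature with respect to the surface vanishes, together with formula (2.10): for the base curve of a ruled surface, $\kappa_g=\langle U\times T,T'\rangle$, where $T$ is the unit tangent of the base curve, $T'$ its derivative with respect to arc length, and $U$ the unit surface normal restricted to the base curve (that is, taken at $v=0$). Here the base curve is $\overline{\beta}$, so $T=\overline{T}=\overline{\beta}'$ and $T'=\overline{\beta}''$, and the two surfaces in question are the normal surface $\Phi_1$ of (4.1) and the binormal surface $\Phi_2$ of (4.2).

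The decisive observation is that the hypothesis collapses the W-direction curve to a straight line. Since $\beta$ is a general helix, $\tau/\kappa$ is constant (Theorem 2), hence $\tau\kappa'-\tau'\kappa=0$, and therefore by the relations (2.3) one has $\overline{\kappa}=|\tau\kappa'-\tau'\kappa|/(\kappa^2+\tau^2)=0$. Consequently $\overline{\beta}''=\overline{T}'=0$, so on any surface containing $\overline{\beta}$ we get $\kappa_g=\langle U\times T,\overline{\beta}''\rangle=0$; in particular this holds on $\Phi_1$ and on $\Phi_2$, which proves both assertions.

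If one prefers the conclusion exhibited surface by surface without passing through this degeneracy, I would instead compute $U$ at $v=0$ for each surface. Crossing $(\Phi_1)_s$ and $(\Phi_1)_v$ and setting $v=0$ yields the direction $\overline{T}\times\overline{N}=\overline{B}$ (equivalently, the expression for $U_1$ already obtained in the proof of the minimality theorem reduces to $-N=\overline{B}$ at $v=0$), whence $\kappa_g=\langle \overline{B}\times\overline{T},\,\overline{\kappa}\,\overline{N}\rangle=\overline{\kappa}$; likewise for $\Phi_2$ one gets $U|_{v=0}=-\overline{N}$ and $\kappa_g=\langle -\overline{N}\times\overline{T},\,\overline{\kappa}\,\overline{N}\rangle=\overline{\kappa}\langle\overline{B},\overline{N}\rangle=0$. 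In both cases $\kappa_g$ is a scalar multiple of $\overline{\kappa}$, which vanishes because $\beta$ is a general helix, so by statement (1) of the preliminaries $\overline{\beta}$ is a geodesic curve on each surface.

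There is no genuinely hard step here; the mathematical content is entirely in recognizing that "$\beta$ is a general helix'' is equivalent to $\overline{\kappa}=0$. The only thing demanding care is bookkeeping: keeping the Frenet frame $\{T,N,B\}$ of $\beta$ distinct from $\{\overline{T},\overline{N},\overline{B}\}$, using the explicit formulas (2.3) for $\overline{N}$ and $\overline{B}$ (which remain well defined even though the Frenet apparatus of $\overline{\beta}$ degenerates once $\overline{\kappa}=0$), and respecting the sign conventions fixed in Remark 1.
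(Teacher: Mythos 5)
Your proposal is correct, and its main line of argument is genuinely different from the paper's. The paper proves the theorem by brute force: it takes the $v$-dependent surface normals $U_{1},U_{2}$ already computed in the minimality theorem, expands $U_{i}\times \overline{T}$ in the frame $\{T,N,B\}$ of $\beta$, and obtains $\kappa _{g_{1}}$ as an explicit multiple of $(\tau /\kappa )^{\prime }$ and $\kappa _{g_{2}}=0$ identically, so that the general helix hypothesis kills $\kappa _{g_{1}}$. Your primary argument short-circuits all of this by observing that the hypothesis is equivalent to $\overline{\kappa }=\left\vert \tau \kappa ^{\prime }-\tau ^{\prime }\kappa \right\vert /(\kappa ^{2}+\tau ^{2})=0$, hence $\overline{\beta }^{\prime \prime }=\overline{T}^{\prime }=0$, so that $\kappa _{g}=\left\langle U\times \overline{T},\overline{T}^{\prime }\right\rangle =0$ on \emph{any} surface through $\overline{\beta }$ -- a cleaner and more general explanation (the W-direction curve of a general helix is a straight line), and you correctly pre-empt the only real objection, namely that $\overline{N},\overline{B}$ and hence $\Phi _{1},\Phi _{2}$ remain defined via (2.3) even though the Frenet apparatus of $\overline{\beta }$ degenerates. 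Your secondary computation is essentially the paper's argument done properly at $v=0$: it gives $\kappa _{g_{1}}=\overline{\kappa }$ and $\kappa _{g_{2}}=\overline{\kappa }\left\langle \overline{B},\overline{N}\right\rangle =0$, which agrees with the paper's formulas restricted to the base curve (note $-(\tau /\kappa )^{\prime }/(1+(\tau /\kappa )^{2})=\overline{\kappa }$ with the sign convention of Remark 1), and, like the paper, it shows the binormal-surface half holds without any helix assumption. What the paper's longer computation buys is the explicit $v$-dependent expressions reused across Theorems 13--16; what yours buys is transparency about why the statement is true, at the cost of making visible that under the hypothesis the theorem is nearly degenerate.
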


\begin{proof}
Lets find the geodesic curvatures of the W-direction curve $\overline{\beta }
$ with respect to the normal and binormal surfaces of $\overline{\beta }$
given in (4.1) and (4.2)$.$ By using the equation (2.10), the geodesic
curvatures are%
\begin{eqnarray*}
\kappa _{g_{1}} &=&\left\langle U_{1}\times \overline{T},\overline{T}%
^{\prime }\right\rangle \\
\kappa _{g_{2}} &=&\left\langle U_{2}\times \overline{T},\overline{T}%
^{\prime }\right\rangle .
\end{eqnarray*}%
By using the same terminology with the previous proof and $\overline{T}=Y%
\mathbf{T}+X\mathbf{B}$, we have 
\begin{eqnarray*}
U_{1}\times \overline{T} &=&\frac{1}{Z_{1}}(AY+DX)(Y\mathbf{B}-X\mathbf{T})
\\
U_{2}\times \overline{T} &=&-\frac{1}{Z_{2}}\mathbf{N.}
\end{eqnarray*}%
Deciding the equation $\overline{T}^{\prime }=Y^{\prime }\mathbf{T}%
+X^{\prime }\mathbf{B}$, we get%
\begin{eqnarray*}
\kappa _{g_{1}} &=&-\frac{1}{Z_{1}}(AY+DX)(Y^{\prime }X-YX^{\prime }) \\
\kappa _{g_{2}} &=&0.
\end{eqnarray*}%
By using appropriate expressions and $Y^{\prime }X-YX^{\prime }=\frac{%
{\LARGE \tau }^{\prime }{\LARGE \kappa -\tau \kappa }^{\prime }}{{\LARGE %
\kappa }^{2}{\LARGE +\tau }^{2}}$, \ we get finally that;%
\begin{equation*}
\kappa _{g_{1}}=-\frac{1}{Z_{1}}\left( 1+v(\frac{\tau }{\kappa })^{\prime }%
\frac{1}{1+(\frac{\tau }{\kappa })^{2}}\right) (\frac{\tau }{\kappa }%
)^{\prime }\frac{1}{1+(\frac{\tau }{\kappa })^{2}}.
\end{equation*}%
If $\beta $ is general helix, then $(\frac{\tau }{\kappa })^{\prime }=0$ and 
$\kappa _{g_{1}}=0.$
\end{proof}

\begin{theorem}
Let $\beta $ be any curve with arc-length parameter $s.$ The W-direction
curve $\overline{\beta }$ which is on the normal surface of the W-direction
curve is an asymptotic line. $\beta $ is general helix if and only if the
W-direction curve $\overline{\beta }$ which is on the binormal surface is an
asymptotic line.
\end{theorem}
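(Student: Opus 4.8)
The plan is to invoke statement 2) of the Preliminaries: the base curve of a ruled surface is an asymptotic line if and only if its normal curvature with respect to that surface vanishes, which by $(2.11)$ means $\langle\overline{\beta}^{\prime\prime},U\rangle=0$, where $U$ is the surface unit normal evaluated along the base curve, that is at $v=0$. Since $\overline{\beta}$ is parametrized by its arc-length $s$ with $\overline{\beta}^{\prime}=\overline{T}$, the Frenet formula for $\overline{\beta}$ gives $\overline{\beta}^{\prime\prime}=\overline{T}^{\prime}=\overline{\kappa}\,\overline{N}$, so the whole statement reduces to evaluating $\overline{\kappa}\,\langle\overline{N},U\rangle$ for each of the surfaces $\Phi_{1}$ in $(4.1)$ and $\Phi_{2}$ in $(4.2)$, noting that $\overline{\beta}$ is the base curve (the $v=0$ section) of both.

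First I would pin down the unit normal along the base curve. For a ruled surface $\Phi(s,v)=\alpha(s)+v\delta(s)$ one has $U|_{v=0}=(\alpha^{\prime}\times\delta)/\left\Vert\alpha^{\prime}\times\delta\right\Vert$. For the normal surface $\Phi_{1}$ this gives $U_{1}|_{v=0}=(\overline{T}\times\overline{N})/\left\Vert\overline{T}\times\overline{N}\right\Vert=\overline{B}$; this is consistent with the expression $U_{1}=\frac{1}{Z_{1}}\bigl((-CY)T-(AY+DX)N-(CX)B\bigr)$ from the proof of the minimality result (Theorem 12), since at $v=0$ we have $C=0$, $A=Y$, $D=X$, hence $AY+DX=X^{2}+Y^{2}=1$ and $Z_{1}=1$, so $U_{1}|_{v=0}=-N=\overline{B}$ by $(2.3)$. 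For the binormal surface $\Phi_{2}$ one gets $U_{2}|_{v=0}=(\overline{T}\times\overline{B})/\left\Vert\overline{T}\times\overline{B}\right\Vert=-\overline{N}$, again matching $U_{2}=\frac{1}{Z_{2}}\bigl((X-v\tau)T-(Y+v\kappa)B\bigr)$ at $v=0$, which equals $XT-YB=-\overline{N}$ by $(2.3)$.

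Then I would substitute into $(2.11)$. For the normal surface, $\kappa_{n}=\langle\overline{\kappa}\,\overline{N},\overline{B}\rangle=0$ because $\{\overline{T},\overline{N},\overline{B}\}$ is orthonormal; thus the normal curvature of $\overline{\beta}$ on its normal surface vanishes identically and $\overline{\beta}$ is an asymptotic line on $\Phi_{1}$ with no hypothesis on $\beta$. For the binormal surface, $\kappa_{n}=\langle\overline{\kappa}\,\overline{N},-\overline{N}\rangle=-\overline{\kappa}$, and by $(2.3)$, $\overline{\kappa}=\left\vert\tau\kappa^{\prime}-\tau^{\prime}\kappa\right\vert/(\kappa^{2}+\tau^{2})$; since $\beta$ is a Frenet curve ($\kappa\neq0$), this vanishes precisely when $\tau\kappa^{\prime}-\tau^{\prime}\kappa=0$, i.e. when $(\tau/\kappa)^{\prime}=0$, which by the characterization of general helices (Theorem 2) is exactly the condition that $\beta$ be a general helix. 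Applying statement 2) of the Preliminaries once more gives the asserted equivalence. The calculation is routine; the only delicate point is matching the sign and orientation of the surface unit normal at $v=0$, so that the clean cross-product description and the component expressions carried over from the proof of Theorem 12 agree, and I anticipate no genuine obstacle beyond that.
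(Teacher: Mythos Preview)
Your argument is correct. Both you and the paper compute the normal curvature $\kappa_n=\langle\overline{\beta}^{\prime\prime},U\rangle$ and invoke the asymptotic-line criterion from the Preliminaries, so the strategy is the same. The execution differs: the paper expands everything in the Frenet frame $\{T,N,B\}$ of the \emph{original} curve $\beta$, writing $\overline{\beta}^{\prime\prime}=Y^{\prime}T+X^{\prime}B$ and using the full component form of $U_1,U_2$ (with $v$ left in), then appeals to the identities $XX^{\prime}+YY^{\prime}=0$ and $Y^{\prime}X-YX^{\prime}=(\tau^{\prime}\kappa-\tau\kappa^{\prime})/(\kappa^{2}+\tau^{2})$ to reach the conclusions. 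You instead stay in the Frenet frame $\{\overline{T},\overline{N},\overline{B}\}$ of $\overline{\beta}$ and evaluate $U$ at $v=0$, where it is simply $\overline{B}$ (normal surface) or $-\overline{N}$ (binormal surface); orthonormality then makes the first normal curvature vanish trivially and reduces the second to $-\overline{\kappa}$, after which $(2.3)$ and Theorem~2 finish the job.

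Your route is more elementary and in fact more general: it shows at once that the base curve of \emph{any} normal ruled surface is asymptotic, and that the base curve of \emph{any} binormal ruled surface is asymptotic precisely when its curvature vanishes---the W-direction hypothesis only enters to translate $\overline{\kappa}=0$ into the general-helix condition on $\beta$. The paper's computation, by contrast, yields explicit expressions in terms of $\kappa,\tau$ of $\beta$ that remain valid for arbitrary $v$, which is not needed for the statement but is consistent with the style of the surrounding theorems.
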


\begin{proof}
The normal curvatures of $\overline{\beta }$ with respect to the normal and
binormal surfaces in (4.1) and (4.2) are computed by the equation (2.11) as;%
\begin{eqnarray*}
\kappa _{n_{1}} &=&\left\langle \overline{\beta }^{\prime \prime
},U_{1}\right\rangle \\
\kappa _{n_{2}} &=&\left\langle \overline{\beta }^{\prime \prime
},U_{2}\right\rangle .
\end{eqnarray*}%
We can write that; \ $\overline{\beta }^{\prime \prime }=\overline{T}%
^{\prime }=Y^{\prime }\mathbf{T}+X^{\prime }\mathbf{B}$ and also by making
the dot product of \ $\overline{\beta }^{\prime \prime }$ and $U_{1},U_{2}$,
we get%
\begin{eqnarray*}
\kappa _{n_{1}} &=&-\frac{C}{Z_{1}}(XX^{\prime }+YY^{\prime }) \\
\kappa _{n_{2}} &=&\frac{1}{Z_{2}}(Y^{\prime }X-YX^{\prime }).
\end{eqnarray*}%
Since $XX^{\prime }+YY^{\prime }=0$ \ and \ $Y^{\prime }X-YX^{\prime }=\frac{%
{\LARGE \tau }^{\prime }{\LARGE \kappa -\tau \kappa }^{\prime }}{{\LARGE %
\kappa }^{2}{\LARGE +\tau }^{2}}$, we simply obtain that;%
\begin{eqnarray*}
\kappa _{n_{1}} &=&0 \\
\kappa _{n_{2}} &=&\frac{1}{Z_{2}}(\frac{\tau }{\kappa })^{\prime }\frac{1}{%
1+(\frac{\tau }{\kappa })^{2}}.
\end{eqnarray*}%
So $\overline{\beta }$ which is on the normal surface of the W-direction
curve is an asymptotic line. $\beta $ is general helix if and only if $(%
\frac{\tau }{\kappa })^{\prime }=0$ which means that $\kappa _{n_{2}}=0$ and
it is an asymtotic line.
\end{proof}

\begin{theorem}
Let $\beta $ be any curve with arc-length parameter $s.$ If $\beta $ is
general helix, then the W-direction curve $\overline{\beta }$ which is on
the normal surface and the binormal surface of the W-direction curve is a
principal line.
\end{theorem}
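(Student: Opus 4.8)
The plan is to evaluate, for $i=1,2$, the geodesic torsion $\tau_{g_i}$ of $\overline{\beta}$ with respect to the normal surface $\Phi_1$ of (4.1) and the binormal surface $\Phi_2$ of (4.2) by means of formula (2.12), $\tau_{g_i}=\langle U_i\times U_i',\overline{T}'\rangle$, and to show that it vanishes identically as soon as $\tau/\kappa$ is constant. All the data needed are already at hand from the proofs of the preceding theorems of this section: the unit normals
\[
U_1=\tfrac{1}{Z_1}\bigl((-CY)\mathbf{T}-(AY+DX)\mathbf{N}-(CX)\mathbf{B}\bigr),\qquad U_2=\tfrac{1}{Z_2}\bigl((X-v\tau)\mathbf{T}-(Y+v\kappa)\mathbf{B}\bigr),
\]
together with $\overline{T}=Y\mathbf{T}+X\mathbf{B}$ and the identity $\overline{\beta}''=\overline{T}'=Y'\mathbf{T}+X'\mathbf{B}$, where $X=\frac{\kappa}{\sqrt{\kappa^2+\tau^2}}$ and $Y=\frac{\tau}{\sqrt{\kappa^2+\tau^2}}$.

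First I would exploit the crucial structural fact that $\overline{T}'$ has no $\mathbf{N}$–component, so that $\tau_{g_i}=Y'\langle U_i\times U_i',\mathbf{T}\rangle+X'\langle U_i\times U_i',\mathbf{B}\rangle$; thus every term of $\tau_{g_i}$ carries a factor $X'$ or $Y'$. Since $X$ and $Y$ depend only on the single ratio $\tau/\kappa$, both $X'$ and $Y'$ are multiples of $(\tau/\kappa)'$. Now if $\beta$ is a general helix, Theorem 2 gives that $\tau/\kappa$ is constant, hence $(\tau/\kappa)'=0$, so $X'=Y'=0$; consequently $\overline{T}'=0$ and therefore $\tau_{g_1}=\tau_{g_2}=0$. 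By item 3) of the Preliminaries — a curve lying on a surface is a principal line precisely when its geodesic torsion relative to that surface vanishes — it follows that $\overline{\beta}$ is a principal line of both the normal surface and the binormal surface of the W-direction curve.

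If one wishes, the explicit value of $\tau_{g_i}$ for an arbitrary base curve can be recorded by differentiating $U_1$ and $U_2$ with the Frenet formulas of $\beta$ (the term arising from differentiating the scalar $1/Z_i$ contributes a multiple of $U_i\times U_i=0$ and drops out), extracting the $\mathbf{T}$– and $\mathbf{B}$–coefficients of $U_i\times U_i'$, and pairing with $(Y',0,X')$; this is a routine computation entirely parallel to those for $\kappa_{g_i}$ and $\kappa_{n_i}$ in the preceding proofs, and it again exhibits $(\tau/\kappa)'$ as an overall factor. I do not anticipate any real obstacle: the only care needed is the bookkeeping of signs and of the normalizing factors $Z_i$, exactly as before. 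It is perhaps worth remarking that when $\tau/\kappa$ is constant one also has $\overline{\kappa}=\frac{|\tau\kappa'-\tau'\kappa|}{\kappa^2+\tau^2}=0$, which is why $\overline{T}'$ vanishes outright and the conclusion comes out so cleanly.
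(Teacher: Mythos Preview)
Your proof is correct and follows the same overall path as the paper---apply formula (2.12) together with the identity $\overline{T}'=Y'\mathbf{T}+X'\mathbf{B}$ and observe that $(\tau/\kappa)'$ appears as an overall factor---but you take a welcome shortcut. The paper works out $U_i\times U_i'$ in full (introducing auxiliary abbreviations $r,q,t$ and $K,L,M$), pairs the result with $\overline{T}'$, and only then recognises the factor $(\tau/\kappa)'$ in the resulting explicit formulas for $\tau_{g_1}$ and $\tau_{g_2}$. You instead note at the outset that $X'$ and $Y'$ are themselves multiples of $(\tau/\kappa)'$, so that for a general helix $\overline{T}'=0$ identically (equivalently $\overline{\kappa}=0$), and hence $\tau_{g_i}=\langle U_i\times U_i',0\rangle=0$ with no further computation. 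The paper's longer route has the modest advantage of producing closed expressions for $\tau_{g_i}$ valid for arbitrary $\beta$; your argument is cleaner and entirely sufficient for the stated conclusion.
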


\begin{proof}
Lets find the geodesic torsions of the W-direction curve $\overline{\beta }$
with respect to the normal and binormal surfaces given in (4.1) and (4.2).
By using the equation (2.12), the geodesic torsions are;%
\begin{eqnarray*}
\tau _{g_{1}} &=&\left\langle U_{1}\times U_{1}^{\prime },\overline{T}%
^{\prime }\right\rangle \\
\tau _{g_{2}} &=&\left\langle U_{2}\times U_{2}^{\prime },\overline{T}%
^{\prime }\right\rangle .
\end{eqnarray*}%
After some calculations and using the abbreviations as \ $r=-\frac{{\LARGE CY%
}}{{\LARGE Z}_{1}}$, \ $q=-\frac{{\LARGE AY+DX}}{{\LARGE Z}_{1}}$, \ $t=-%
\frac{{\LARGE CX}}{{\LARGE Z}_{1}}$ \ \ and \ \ $K=\left( \frac{{\LARGE %
X-v\tau }}{{\LARGE Z}_{2}}\right) ^{\prime },$ \ $L=\frac{{\LARGE %
X+Y+v(\kappa -\tau )}}{{\LARGE Z}_{2}},$ \ $M=\left( \frac{{\LARGE Y+v\kappa 
}}{{\LARGE Z}_{2}}\right) ^{\prime },$ we find;%
\begin{eqnarray*}
U_{1}\times U_{1}^{\prime } &=&\left( \frac{CX}{Z_{1}}(r\kappa +q^{\prime
}-t\tau )-\frac{AY+DX}{Z_{1}}(q\tau +t^{\prime })\right) \mathbf{T} \\
&&+\left( \frac{CY}{Z_{1}}(q\tau +t^{\prime })-\frac{CX}{Z_{1}}(r^{\prime
}-q\kappa )\right) \mathbf{N} \\
&&+\left( \frac{AY+DX}{Z_{1}}(r^{\prime }-q\kappa )-\frac{CY}{Z_{1}}(r\kappa
+q^{\prime }-t\tau )\right) \mathbf{B}
\end{eqnarray*}%
and%
\begin{eqnarray*}
U_{2}\times U_{2}^{\prime } &=&\left( \frac{Y+v\kappa }{Z_{2}}\right) L.%
\mathbf{T+}\left( \left( \frac{X-v\tau }{Z_{2}}\right) M-\left( \frac{%
Y+v\kappa }{Z_{2}}\right) K\right) \mathbf{N} \\
&&+\left( \frac{X-v\tau }{Z_{2}}\right) L.\mathbf{B}
\end{eqnarray*}%
By taking into account that $\overline{T}^{\prime }=Y^{\prime }\mathbf{T}%
+X^{\prime }\mathbf{B}$, we obtain lastly;%
\begin{eqnarray*}
\tau _{g_{1}} &=&\frac{1}{Z_{1}}\left( \frac{C}{Z_{1}}\right) ^{\prime }(%
\frac{\tau }{\kappa })^{\prime }\frac{1}{1+(\frac{\tau }{\kappa })^{2}}%
\left( 1+v(\frac{\tau }{\kappa })^{\prime }\frac{1}{1+(\frac{\tau }{\kappa }%
)^{2}}\right) \\
&&+\frac{C(r\kappa +q^{\prime }-t\tau )}{Z_{1}}(\frac{\tau }{\kappa }%
)^{\prime }\frac{1}{1+(\frac{\tau }{\kappa })^{2}} \\
\tau _{g_{2}} &=&\frac{v\kappa ^{2}(\kappa +\tau +v(\kappa -\tau )\sqrt{%
\kappa ^{2}+\tau ^{2}})}{(\kappa ^{2}+\tau ^{2})(1+v^{2}(\kappa ^{2}+\tau
^{2}))}.(\frac{\tau }{\kappa })^{\prime }
\end{eqnarray*}%
If $\beta $ is general helix, then $(\frac{\tau }{\kappa })^{\prime }=0$ and
so $\tau _{g_{1}}=0,$ \ $\tau _{g_{2}}=0.$
\end{proof}

\bigskip

\end{document}